\def \[{\begin{equation}}
	\def \]{\end{equation}}
\newtheorem{theorem}{Theorem}[section]
\newtheorem{corollary}{Corollary}[section]
\newtheorem{lemma}{Lemma}[section]
\newtheorem{remark}{Remark}[section]
\newtheorem{example}{Example}[section]
\numberwithin{equation}{section}
\def\wtd{\widetilde}
\def\bbI{\mathbb{I}}
\def\bbR{\mathbb{R}}
\DeclareMathOperator{\rmc}{c}
\DeclareMathOperator{\card}{card}
\DeclareMathOperator{\diag}{diag}
\DeclareMathOperator{\sign}{sign}
\DeclareMathOperator{\rank}{rank}
\DeclareMathOperator{\T}{T}
\title{Solutions for Underdetermined Generalized Absolute Value Equations}
\author[a]{Cairong Chen\thanks{Supported in part by HK GRF 12300322 while he was a visiting scholar at HKBU from September 2022 to August 2023 and by the Fujian Alliance of Mathematics (2023SXLMQN03) and the Natural Science Foundation of Fujian Province (2021J01661). Email address: cairongchen@fjnu.edu.cn.}}
\author[a]{Xuehua Li\thanks{Email address: 3222714384@qq.com.}}
\author[b]{Ren-Cang Li\thanks{Supported in part by NSF grant DMS-2407692. Email address:  rcli@uta.edu.}}
\affil[a]{School of Mathematics and Statistics, Fujian Normal University, Fuzhou, 350117, P.R. China}
\affil[b]{Department of Mathematics,
           University of Texas at Arlington,
           P.O. Box 19408,
           Arlington, TX 76019-0408, USA}
\begin{document}
\date{\today}
\maketitle

\begin{abstract}
An underdetermined generalized absolute value equation (GAVE) may have no solution, one solution, finitely many or infinitely many solutions.
 This paper is concerned with sufficient conditions that guarantee the existence of solutions to an underdetermined GAVE. Particularly, sufficient conditions are established for
 an underdetermined GAVE to have infinitely many solutions with no zero entry that possess a particular or any given sign pattern. Iterative methods are proposed
 for the case when the underdetermined GAVE does have a solution.
 Some existing results for square GAVE are also extended.

{\bf Keyword:}  underdetermined linear system; underdetermined generalized absolute value equations; existence of solutions; iterative methods.

{\bf MSC:} 15A06; 65H10; 90C30; 90C33.
\end{abstract}

\section{Introduction}\label{sec:intro}
We consider the generalized absolute value equation (GAVE) of  type
\begin{equation}\label{eq:gave}
Ax - B|x| = b,
\end{equation}
where $A, B\in \mathbb{R}^{m\times n}$ and $b\in \mathbb{R}^m$ are given,  $x\in\mathbb{R}^n$ is unknown, and $|x|$ stands for taking entrywise absolute value. By being underdetermined, we mean $m<n$, i.e., there are fewer equations in \eqref{eq:gave} than unknowns.
Square GAVE, i.e., $m=n$ in \eqref{eq:gave}, was first introduced by Rohn \cite{rohn2004}
and further investigated in \cite{mang2007a,prok2009,mezz2020,hlad2018} and references therein.

When $B=0$, GAVE~\eqref{eq:gave} reduces to the usual system of linear equations $Ax=b$,
which has been subjected to intensive studies \cite{saad2003} and plays a central role in scientific computing and engineering applications. When $m=n$ and $B$ is nonsingular,  GAVE~\eqref{eq:gave} can be converted into the so-called
 absolute value equation (AVE)
\begin{equation}\label{eq:ave}
Ax - |x| = b.
\end{equation}
Due to the appearance of $|x|$, solving  GAVE~\eqref{eq:gave} in general is NP-hard \cite{mang2007a}, and, if it has a solution, checking whether GAVE~\eqref{eq:gave} has a unique solution or multiple solutions is NP-complete \cite{prok2009}.

Over the past two decades, GAVE and AVE have received considerable attention
in the community of numerical optimization and numerical algebra.
GAVE and AVE are closely related to many important optimization problems,
such as the linear complementarity problem (LCP) and the mixed-integer programming \cite{mang2007a,mame2006,prok2009}.
In addition, AVE also arises from the characterization of certain solutions
to the system of linear interval equations \cite{rohn1989} and from the characterization
of the regularity of interval matrices \cite{rohn2009b}.
Recently, a novel transform function based on underdetermined GAVE is used
to improve the security of the cancellable biometric system \cite{dnhk2023}.
Conceivably in the years ahead, GAVE and AVE may find more applications in different contexts
where the underlying mathematical problem may be reformulated involving the absolute value operator.

There have been numerous investigations on both theoretical and numerical aspects for GAVE and AVE.
Theoretically, conditions for the existence,  nonexistence and uniqueness of solutions to GAVE~\eqref{eq:gave} or AVE~\eqref{eq:ave} can be found in, e.g.,  \cite{hlad2018,hlad2023,love2013,mame2006,mang2009b,mezz2020,rohn2009a,rohf2014,wugu2016,
wush2021,wuli2018,prok2009,
kdhm2024,hlmo2023,tzcy2023,zhwe2009,tazh2019,guo2024,chyh2023} and references therein. In particular, the connection between AVE~\eqref{eq:ave} and LCP is investigated in \cite{mame2006,huhu2010}.
Numerically, several algorithms for solving GAVE or AVE are available, such as, Newton-type iterative methods \cite{guo2024,mang2009a,shzh2023,wacc2019,zhwl2021,bcfp2016,tzcy2023,zhwe2009,lilw2018,
caqz2011,zahl2023},
iterative methods based on matrix splitting \cite{doss2020,kema2017,lild2022,zhzl2023,soso2023,edhs2017,chyh2024,alpa2023},
concave minimization approaches \cite{mang2007a,zahl2021,mang2007b},
methods based on dynamic models \cite{cyyh2021,maer2018,lyyh2023,yzch2024},
and others \cite{yuch2022,ke2020,rohf2014,tazh2019,jizh2013,chyh2023}.
An up-to-date overview can be found in \cite{hmhk2024}.
However, most of these existing results are about square GAVE, i.e., $m=n$ in \eqref{eq:gave}. There
are very few studies on non-square GAVE, i.e., $m\ne n$ in~\eqref{eq:gave}.
Mangasarian \cite{mang2007a} developed a successive linearization algorithm via concave minimization to solve non-square GAVE~\eqref{eq:gave}. Prokopyev \cite{prok2009} discussed connections of non-square GAVE~\eqref{eq:gave} with mixed integer programming and LCP. Rohn \cite{rohn2019} considered existence, uniqueness and computation of a solution for overdetermined GAVE, i.e., $m>n$ in \eqref{eq:gave}.
Recently, Xie, Qi and Han~\cite[Theorem~3.2]{xiqh2024} showed that a non-square GAVE~\eqref{eq:gave} cannot have a unique solution for any right-hand side $b\in \mathbb{R}^m$. Besides the numerical methods proposed in \cite{rohn2019,xiqh2024,mang2007a}, to the best of our knowledge, the methods that can handle non-square GAVE~\eqref{eq:gave} are the  alternating projection method \cite{alct2023} and the proximal gradient multi-step ascent/decent method \cite{dawz2024}.

Non-square GAVE does appear in real-world applications. For instance, in the proof of \cite[Proposition~2.1]{prok2009}, the reduction from the partition problem resulted in an overdetermined GAVE. As mentioned earlier, the  novel transform function based on an underdetermined GAVE is used to improve the security of the cancellable biometric system \cite{dnhk2023}. Non-square GAVE can also appear as a constraint in the absolute value programming problem \cite{mang2007a}.

An underdetermined GAVE may have no solution, one solution, finitely many or infinitely many solutions. The main goal of this paper is to investigate existence of solutions to  underdetermined GAVE~\eqref{eq:gave}. Along the way, numerical methods will also be developed to solve  underdetermined GAVE
when it has a solution.

The rest of this paper is organized as follows. In Section \ref{sec:UD-LinSys}, a few preliminary results on
underdetermined linear system are established. In Section \ref{sec:sufficondi}, the existence of infinitely many solutions and the existence of at least one solution to underdetermined GAVE are studied, respectively, and some numerical algorithms are developed as well.
Conclusion remarks are made in Section \ref{sec:Conclusions}.

\textbf{Notation.} We will adopt the following convention:
\begin{itemize}
  \item $\mathbb{R}^{m\times n}$ is the set of all $m \times n$ real matrices,
        $\mathbb{R}^{m}= \mathbb{R}^{m\times 1}$, and $\bbR=\bbR^1$.
        $I_n\in\bbR^{n\times n}$ is the identity matrix or simply $I$ if its size is clear from the context.
        For $x\in \mathbb{R}^n$, $\diag(x)$ denotes the diagonal matrix whose $i$th diagonal entry
        is the $i$th entry of $x$.

  \item $A^{\T}$, $A^\dag$ and $\rank(A)$ denote the transpose, the Moore-Penrose inverse and the rank of $A$, respectively.

  \item For $X\in \mathbb{R}^{m\times n}$, $X_{(i,j)}$ refers to its $(i,j)$th entry, $X_{(:,i)}$ its $i$th column,
        $X_{(i,:)}$ its $i$th row of $X$, and $X_{(:,i:j)}$ is the submatrix of $X$ consisting of its row $i$ to row $j$.
        In general, given $\bbI_1\subseteq \{ 1,2,\ldots, m\}$ and $\bbI_2 \subseteq \{1,2,\ldots, n\}$,
        $X_{(\bbI_1, \bbI_2)}$ is the submatrix of $X$ consisting of entries $X_{(i,j)}$ for $i\in\bbI_1,\, j\in\bbI_2$.
        Finally,
        $|X| \in\mathbb{R}^{m\times n}$ means taking entrywise absolute value.
        Analogously, for $x\in \mathbb{R}^{ n}$, $x_{(i)}$ is the $i$th entry of $x$ and
        $x_{\bbI}$ is the subvector of $x$ consisting of entries $x_{(i)}$ for $i\in\bbI$.
        For an index set $\bbI\subseteq \{1,2,\ldots, n\}$, $\card(\bbI)$ denotes its cardinality, and  $\bbI^{\rmc}$ is its complement with respect to $\{1,2,\ldots,n\}$.

  \item Inequality $X \le Y$ (or $X < Y$) means $X_{(i,j)} \le Y_{(i,j)}$ (or $X_{(i,j)} < Y_{(i,j)}$) for all $(i,j)$, and similarly for $X>Y$ and $X\ge Y$. In particular, $X\ge 0$ means that $X$ is entrywise nonnegative.

  \item $\|x\|_p:=(\sum_j|x_j|^p)^{1/p}$ for $1\le p\le\infty$ is
        the vector $p$-norm of $x=[x_i]\in\bbR^n$ and $\|A\|_p$ for $1\le p\le\infty$ is the $\ell_p$-operator norm.

  \item For $A\in\mathbb{R}^{n\times n}$, $\rho(A)$ and $\det (A)$ denote its spectral radius and determinant, respectively.

  \item The sign of $r\in\bbR$ is defined by $\sign(r)=1$ if $r> 0$, $0$ if $r=0$ and $-1$ if $r<0$.

\end{itemize}

\section{Solutions for an underdetermined linear system}\label{sec:UD-LinSys}
In this section, we will establish a few preliminary results on
underdetermined linear system
\begin{equation}\label{eq:le}
Ax = b,
\end{equation}
where $A\in \mathbb{R}^{m\times n}$ with $m<n$.
It is well known that linear system \eqref{eq:le} will have infinitely many solutions for any $b\in \mathbb{R}^m$ whenever $m<n$ and $A$ has full row rank, i.e., $\rank(A)=m$.

In the following, we will explore conditions under which \eqref{eq:le}
has infinitely many solutions having certain sign pattern or any sign pattern. By that
vector $x\in\bbR^n$ has the sign pattern
$s\in\{-1,0,1\}^n\subset\bbR^n$, we mean $\sign(x_{(i)})=s_{(i)}$ for $1\le i\le n$. In particular, $\diag(s)x\ge 0$.

\begin{lemma}\label{lem:uleg}
Let $A\in \mathbb{R}^{m\times n}$ and $b\in \mathbb{R}^m$ with $m<n$.
Suppose that the linear system~\eqref{eq:le} is consistent and has a solution $x_*$, and
let $A_{(\bbI_1,\bbI_2)}$ be a nonsingular submatrix of $A$ with $\card(\bbI_1) = \card(\bbI_2) = \rank(A)$ such that $x_{*\bbI_2}$ has no zero entry.
\begin{enumerate}[{\rm (a)}]
  \item If not all entries of $x_{*\bbI_2^{\rmc}}$ are $0$, then the linear system \eqref{eq:le} has infinitely many solutions with the same sign pattern as $x_*$;

  \item The linear system~\eqref{eq:le} has infinitely
        many solutions $x$ such that $x_{\bbI_2}$ has the same sign pattern as $x_{*\bbI_2}$;

  \item If $x_{*\bbI_2^{\rmc}}=0$, then $x_*$ is unique in the sense that \eqref{eq:le} has no solution other than $x_*$ with the same sign pattern as $x_*$.
\end{enumerate}
\end{lemma}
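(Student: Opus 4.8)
The plan is to reduce all three parts to the structure of $\ker(A)$ relative to the column splitting $\bbI_2,\bbI_2^{\rmc}$. First I would observe that, since $A_{(\bbI_1,\bbI_2)}$ is nonsingular of order $r:=\rank(A)$, the $r$ rows of $A$ indexed by $\bbI_1$ are linearly independent and hence form a basis of the row space of $A$; consequently $\ker(A)=\ker(A_{(\bbI_1,:)})$. Splitting the columns of $A_{(\bbI_1,:)}$ according to $\bbI_2$ and $\bbI_2^{\rmc}$ and solving, every $z\in\ker(A)$ is determined by its free part $z_{\bbI_2^{\rmc}}$ via $z_{\bbI_2}=-A_{(\bbI_1,\bbI_2)}^{-1}A_{(\bbI_1,\bbI_2^{\rmc})}z_{\bbI_2^{\rmc}}$. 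This yields two facts I will use repeatedly: $\dim\ker(A)=n-r\ge n-m\ge 1$, so $\ker(A)$ is nontrivial, and any $z\in\ker(A)$ with $z_{\bbI_2^{\rmc}}=0$ must be $0$.

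With this in hand, part (b) is immediate: pick any nonzero $z\in\ker(A)$ and set $x(t)=x_*+tz$, which solves \eqref{eq:le} for every $t$. Because $x_{*\bbI_2}$ has no zero entry, for all sufficiently small $|t|$ the entries of $x(t)_{\bbI_2}=x_{*\bbI_2}+tz_{\bbI_2}$ retain their signs, so $x(t)_{\bbI_2}$ shares the sign pattern of $x_{*\bbI_2}$; since $z\ne 0$, the $x(t)$ are pairwise distinct, giving infinitely many such solutions. For part (a) the extra care is that the zero entries of $x_*$ on $\bbI_2^{\rmc}$ must remain zero to preserve the \emph{full} sign pattern. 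Let $J:=\{i\in\bbI_2^{\rmc}:x_{*(i)}\ne 0\}$, which is nonempty by hypothesis. Using the parametrization above I would choose $z\in\ker(A)$ with $z_{\bbI_2^{\rmc}\setminus J}=0$ and $z_J\ne 0$ arbitrary; then $z\ne 0$, yet $z$ vanishes on the zero entries of $x_{*\bbI_2^{\rmc}}$. Setting $x(t)=x_*+tz$ keeps those entries at $0$ for all $t$, and for small $|t|$ preserves the sign of every nonzero entry of $x_*$ (namely those indexed by $\bbI_2\cup J$); hence $x(t)$ has exactly the sign pattern of $x_*$, again producing infinitely many distinct solutions.

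For part (c), suppose $y$ solves \eqref{eq:le} with the same sign pattern as $x_*$. Since $x_{*\bbI_2^{\rmc}}=0$ forces $\sign(x_{*(i)})=0$ for every $i\in\bbI_2^{\rmc}$, matching signs gives $y_{\bbI_2^{\rmc}}=0=x_{*\bbI_2^{\rmc}}$. Then $z:=y-x_*\in\ker(A)$ satisfies $z_{\bbI_2^{\rmc}}=0$, so by the structural fact above $z=0$, i.e.\ $y=x_*$; no other solution with the same sign pattern exists.

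The main obstacle is the structural step identifying $\ker(A)=\ker(A_{(\bbI_1,:)})$ together with the explicit solve for $z_{\bbI_2}$ in terms of $z_{\bbI_2^{\rmc}}$; once that parametrization is established, (b) and (c) are short and (a) only requires routing the nonzero perturbation through $J$ while pinning down the zero entries. A minor point to check is that ``sufficiently small $|t|$'' can be taken uniform over the finitely many nonzero entries of $x_*$, which is clear since each nonzero $x_{*(i)}$ supplies a strictly positive threshold, and the minimum of finitely many positive numbers is positive.
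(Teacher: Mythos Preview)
Your proof is correct. Both you and the paper rely on the same underlying fact: once $A_{(\bbI_1,\bbI_2)}$ is nonsingular of order $r=\rank(A)$, the solution set of $Ax=b$ is parametrized by the free coordinates indexed by $\bbI_2^{\rmc}$, and small changes in those coordinates move $x_{\bbI_2}$ only slightly so that signs are preserved. The paper implements this by first reducing to $\rank(A)=m$, then applying a multi-parameter \emph{relative} perturbation $x_{*i}\mapsto x_{*i}(1+\eta_i)$ of the free coordinates (which automatically fixes the zero entries) together with a continuity argument for $x_{\bbI_2}(\eta)$. You instead identify $\ker(A)=\ker(A_{(\bbI_1,:)})$ directly, pick a single kernel direction $z$ supported on the nonzero free indices $J$, and run a one-parameter line $x_*+tz$. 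Your packaging is arguably a bit cleaner: it avoids the preliminary row reduction, and it dispatches (b) and (c) explicitly, whereas the paper's written proof treats only (a) in detail and leaves (b) and (c) implicit.
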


\begin{proof}
Because the linear system $Ax=b$ is assumed consistent, there are
$m-\rank(A)$ equations in the linear system that are redundant and can be simply dropped without causing any effect to the solution set of the linear system. For that reason, there is no loss of generality to assume $\rank(A)=m$.
Next, if necessary, upon permutating the  columns of $A$, there is no loss of generality, either,
to assume $\bbI_1=\bbI_2=\bbI:=\{1,2,...,m\}$.

With the two harmless assumptions we just made,  we partition $A=[A_1,A_2]$ where $A_1\in\bbR^{m\times m}$ is nonsingular and $x_{*\bbI}$ has no zero entries.
Next we rewrite $Ax=b$ as
$$
A_1x_{\bbI}=b-A_2x_{\bbI^{\rmc}},
$$
where $\bbI^{\rmc}=\{m+1,\ldots,n\}$, the complement of $\bbI$ with respect to $\{1,2,\ldots,n\}$.
Relatively perturb each entry $x_{*i}$ of $x_{*\bbI^{\rmc}}$ to $x_{*i}(1+\eta_i)$ for $|\eta_i|<1$, denote
the resulting vector by $x_{\bbI^{\rmc}}(\eta)$ as a vector valued function in $\eta=[\eta_i]$, and consider
the following linear system
\begin{equation}\label{eq:uleg-pf-1}
A_1x_{\bbI}=b-A_2x_{\bbI^{\rmc}}(\eta).
\end{equation}
For any given $\eta$ with each $|\eta_i|<1$, \eqref{eq:uleg-pf-1} has a unique solution
$x_{\bbI}(\eta)=A_1^{-1}[b-A_2x_{\bbI^{\rmc}}(\eta)]$ which is continuous with respect to $\eta$.
At $\eta=0$, that solution is $x_{*\bbI}$ with no zero entries. Hence by  continuity,
there exists $0<\epsilon<1$ such that each entry of $x_{\bbI}(\eta)$
has the same sign as the corresponding entry of $x_{*\bbI}$ whenever all $|\eta_i|<\epsilon<1$.
The proof is completed upon noticing $\begin{bmatrix}
                                       x_{\bbI}(\eta) \\
                                       x_{\bbI^{\rmc}}(\eta)
                                     \end{bmatrix}$
for all $\eta$ such that all $|\eta_i|<\epsilon<1$ has the same sign pattern
as $\begin{bmatrix}
                                       x_{*\bbI} \\
                                       x_{*\bbI^{\rmc}}
                                     \end{bmatrix}$.
\end{proof}

The condition of \Cref{lem:uleg} implies that $x_*$ has no fewer than $\rank(A)$ nonzero entries, and also
$A$ has a $\rank(A)$-by-$\rank(A)$ nonsingular submatrix for which the corresponding subvector of
$x_*$ has no zero entries. The following example shows that these conditions cannot be removed in general.

\begin{example}
{\rm
Consider  \eqref{eq:le} with
$$
A = \begin{bmatrix} 2& 1 &1\\1&2&-1\end{bmatrix}
\quad \text{and}\quad
b = \begin{bmatrix}1\\
                   -1
    \end{bmatrix}.
$$
It can be seen that $\rank(A)=2$ and any of its $2$-by-$2$ submatrix are nonsingular. It can also be seen that
$x_* = [0,0,1]^{\T}$ is a solution. But the condition: ``$A$ has a nonsingular submatrix $A_{(\bbI_1,\bbI_2)}$ with $\card(\bbI_1) = \card(\bbI_2) = \rank(A)$ such that $x_{*\bbI_2}$ has no zero entry" of \Cref{lem:uleg}
is violated. Note that the general solution of this linear system is $x = [-x_2,x_2,x_2+1]^{\T}$
for any $x_2\in \mathbb{R}$. Hence this $Ax=b$ does not have any other solution
with the same sign pattern as $x_*$. In fact, $x_*$ is  the only nonnegative solution.
}
\end{example}

An immediately consequence of \Cref{lem:uleg} is that the following corollary.

\begin{corollary}\label{cor:uleg}
Let $A\in \mathbb{R}^{m\times n}$ and $b\in \mathbb{R}^m$ with $m<n$.
Suppose that \eqref{eq:le} is consistent and has a solution $x_*$
with no zero entry. Then \eqref{eq:le} has infinitely many solutions
with the same sign pattern as that of  $x_*$.
\end{corollary}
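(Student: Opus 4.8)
The plan is to obtain this as a direct specialization of \Cref{lem:uleg}, where the hypothesis that $x_*$ has \emph{no} zero entry makes every technical condition of that lemma automatic. First I would set $r=\rank(A)$ and invoke the standard linear-algebra fact that a matrix of rank $r$ always possesses a nonsingular $r$-by-$r$ submatrix: there exist index sets $\bbI_1\subseteq\{1,\ldots,m\}$ and $\bbI_2\subseteq\{1,\ldots,n\}$ with $\card(\bbI_1)=\card(\bbI_2)=r$ such that $A_{(\bbI_1,\bbI_2)}$ is nonsingular. This produces the nonsingular submatrix required to apply the lemma.

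Next I would check that the two hypotheses of \Cref{lem:uleg} are met. Because $x_*$ has no zero entry by assumption, \emph{any} subvector of $x_*$ is entrywise nonzero; in particular $x_{*\bbI_2}$ has no zero entry, so the condition ``$x_{*\bbI_2}$ has no zero entry'' holds regardless of which columns $\bbI_2$ the nonsingular submatrix happens to select. This is precisely the point of the corollary: the entrywise-nonzero assumption discharges the submatrix-selection hypothesis for free.

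To reach the stronger conclusion on the full sign pattern of $x_*$ (rather than only the weaker statement about $x_{\bbI_2}$ in part (b)), I would verify the hypothesis of part (a), namely that not all entries of $x_{*\bbI_2^{\rmc}}$ vanish. Here the underdetermined assumption does the work: since $\card(\bbI_2)=r=\rank(A)\le m<n$, the complement $\bbI_2^{\rmc}$ is nonempty, and because $x_*$ has no zero entry, $x_{*\bbI_2^{\rmc}}$ is a nonempty subvector with no zero entries, so its entries are certainly not all zero. Part~(a) of \Cref{lem:uleg} then applies and yields infinitely many solutions of \eqref{eq:le} having the same sign pattern as $x_*$.

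I do not expect any genuine obstacle: the whole content is that the entrywise-nonzero hypothesis trivializes both the submatrix condition and the nonvanishing requirement of part~(a), leaving only the existence of a rank-sized nonsingular submatrix and the strict inequality $m<n$ (which guarantees $\bbI_2^{\rmc}\neq\emptyset$) as the substantive inputs. The only thing worth stating carefully is that the particular choice of $\bbI_2$ is immaterial precisely because $x_*$ is entrywise nonzero.
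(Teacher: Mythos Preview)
Your proposal is correct and follows essentially the same approach as the paper: the paper's proof is the one-line observation that any nonsingular $\rank(A)\times\rank(A)$ submatrix of $A$ satisfies the requirement of \Cref{lem:uleg}, and your argument is simply a more detailed unpacking of that sentence, including the explicit check that $\bbI_2^{\rmc}\ne\emptyset$ so that part~(a) applies.
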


\begin{proof}
Any nonsingular $\rank(A)\times\rank(A)$ submatrix of $A$ satisfies the requirement of
$A_{(\bbI_1,\bbI_2)}$ in \Cref{lem:uleg}.
\end{proof}

In the next lemma, we will show the need of having
$A_{(\bbI_1,\bbI_2)}$ in \Cref{lem:uleg} can be removed, if $x_*$ has more than $\rank(A)$ nonzero entries.

\begin{lemma}\label{lem:uleg'}
Let $A\in \mathbb{R}^{m\times n}$ and $b\in \mathbb{R}^m$ with $m<n$. Suppose that \eqref{eq:le} is consistent and has a solution $x_*$. If $x_*$ has more than $\rank(A)$ nonzero entries, then \eqref{eq:le}  has infinitely many solutions with the same sign pattern as that $x_*$.
\end{lemma}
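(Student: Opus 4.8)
The plan is to prove this directly via a null-space argument, bypassing \Cref{lem:uleg} entirely: the hypothesis that $x_*$ has more than $\rank(A)$ nonzero entries is exactly what forces the columns of $A$ lying in the support of $x_*$ to be linearly dependent, and that dependence yields a perturbation direction confined to the support.

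First I would set $r=\rank(A)$ and let $\bbI_0=\{i : x_{*(i)}\ne 0\}$ denote the support of $x_*$, so $\card(\bbI_0)>r$ by hypothesis. Selecting the columns indexed by $\bbI_0$ gives a submatrix $A_{(:,\bbI_0)}$ with $\card(\bbI_0)$ columns whose rank obeys
$$
\rank\big(A_{(:,\bbI_0)}\big)\le \rank(A)=r<\card(\bbI_0).
$$
Hence these columns are linearly dependent, so there exists a nonzero $d\in\bbR^n$ with $d_{(i)}=0$ for every $i\in\bbI_0^{\rmc}$ and $Ad=0$. I would then consider the family $x(t)=x_*+td$, $t\in\bbR$. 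Each $x(t)$ solves \eqref{eq:le} since $Ax(t)=Ax_*+tAd=b$. For the sign pattern: whenever $i\in\bbI_0^{\rmc}$ we have $x(t)_{(i)}=x_{*(i)}=0$, so those entries stay zero; and for each $i\in\bbI_0$ the value $x_{*(i)}$ is nonzero, so by continuity there is an $\epsilon>0$ with $\sign(x(t)_{(i)})=\sign(x_{*(i)})$ for all $|t|<\epsilon$. Thus every $x(t)$ with $|t|<\epsilon$ shares the sign pattern of $x_*$, and since $d\ne 0$ these are infinitely many distinct solutions.

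The step I would flag as the crux is the support-confinement of $d$: it must vanish on $\bbI_0^{\rmc}$ so that the zero entries of $x_*$ stay exactly zero and the \emph{full} sign pattern (zeros included) is preserved, not merely the signs of the nonzero block. This is precisely the obstruction that \Cref{lem:uleg} circumvents through a nonsingular submatrix supported on nonzero indices; here, however, such a submatrix need not exist, since $A_{(:,\bbI_0)}$ may have rank strictly below $r$. What rescues the argument is that linear dependence of the $\card(\bbI_0)>r$ columns indexed by $\bbI_0$ is guaranteed regardless of that rank, delivering the required $d$ inside the support. The remaining sign-preservation on the nonzero entries is then a routine continuity estimate, e.g.\ taking $\epsilon=\min\{|x_{*(i)}|/|d_{(i)}| : i\in\bbI_0,\, d_{(i)}\ne 0\}$ over the finite index set $\bbI_0$.
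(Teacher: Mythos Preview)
Your proof is correct, and it takes a genuinely different route from the paper's. The paper restricts to the support of $x_*$ (so that the solution has no zero entry there), drops redundant rows to obtain a smaller underdetermined system $\wtd A x_{(1:n')}=\tilde b$ with $\rank(\wtd A)<n'$, and then invokes \Cref{cor:uleg} (hence ultimately \Cref{lem:uleg}) on that smaller system. Your argument instead bypasses \Cref{lem:uleg} and \Cref{cor:uleg} entirely: from $\rank(A_{(:,\bbI_0)})\le\rank(A)<\card(\bbI_0)$ you extract a nonzero null vector $d$ of $A$ supported in $\bbI_0$ and perturb along it, which is more elementary and self-contained. The paper's approach has the advantage of reusing its earlier machinery and keeping the development modular; your approach has the advantage of being shorter and of making transparent exactly why the hypothesis ``more than $\rank(A)$ nonzero entries'' is the right threshold---it is precisely what forces a nontrivial null direction confined to the support, so that zero entries stay zero while nonzero entries retain their signs for small $|t|$. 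Your explicit choice $\epsilon=\min\{|x_{*(i)}|/|d_{(i)}|: i\in\bbI_0,\ d_{(i)}\ne 0\}$ is fine; note that this set is nonempty since $d\ne 0$ and $d$ is supported in $\bbI_0$.
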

\begin{proof}
Let $n'$ be the number of the nonzero entries of $x_*$. By the assumption of the lemma, we know $n'>\rank(A)$.
Without loss of generality, we may assume the last $n-n'$ entries of $x_*$ are $0$.
Next we focus our attention on the possible solutions of \eqref{eq:le} with last $n-n'$ entries being $0$, i.e.,
those $x=\begin{bmatrix}
           x_{(1:n')} \\
           0_{n-n'}
         \end{bmatrix}$ which satisfy
\begin{equation}\label{eq::pf-1}
A_{(:,1:n')}x_{(1:n')}=b.
\end{equation}
We notice that \eqref{eq::pf-1} is consistent because it is satisfied by $x_{*(1:n')}$.
Since $\rank(A_{(:,1:n')})\le\rank(A)$, there are $m-\rank(A_{(:,1:n')})$ redundant equations among the $m$ equations in the linear system $A_{(:,1:n')}x_{(1:n')}=b$. Remove these $m-\rank(A_{(:,1:n')})$ redundant equations to yield a new equivalent linear system
\begin{equation}\label{eq::pf-2}
\wtd Ax_{(1:n')}=\tilde b
\end{equation}
in the sense that the two linear systems share the same solution set. This new equation
\eqref{eq::pf-2} is consistent  because it is satisfied by $x_{*(1:n')}$, which has no zero entry,
and $\wtd A\in\bbR^{m'\times n'}$
where $m'=\rank(A_{(:,1:n')})\le\rank(A)<n'$. Hence the conditions of \Cref{cor:uleg} are satisfied for the linear system \eqref{eq::pf-2}, and therefore it has infinitely many solutions with the same sign pattern as
that $x_{*(1:n')}$. Finally, \eqref{eq:le}
 has infinitely many solutions in the form $\begin{bmatrix}
           x_{(1:n')} \\
           0_{n-n'}
         \end{bmatrix}$ with the same sign pattern as
that of $\begin{bmatrix}
           x_{*(1:n')} \\
           0_{n-n'}
         \end{bmatrix}$.
\end{proof}

\section{Solutions for underdetermined GAVE}\label{sec:sufficondi}
In this section, we will focus on exploring conditions under which an underdetermined GAVE has a solution.
Naturally, it is expected that the right-hand side $b$ holds an important factor in
the existence of a solution and the number of solutions, as \Cref{rem:ns} below demonstrates.
Equally expected, when $A$ and $B$ have certain properties,
GAVE~\eqref{eq:gave} may always have one or more solutions for any given  right-hand side $b$.
Our investigation in this section is divided into two cases:  solution existence that
is independent of $b$ or not.

\begin{example}\label{rem:ns}
{\rm
Consider GAVE~\eqref{eq:gave} with
$$
A = \begin{bmatrix} -1& 0&0\\ -1& 0 & 0\end{bmatrix} \quad \text{and}\quad B = \begin{bmatrix} -2& -1&-1\\ -2& -1 &1\end{bmatrix}.
$$
The table below shows a few examples of $b$, along with the number of solutions to GAVE~\eqref{eq:gave} for the given $b$.
$$
\renewcommand{\arraystretch}{1.3}
\begin{tabular}{|c|c|c|c|c|}
  \hline
  $b$ & $\begin{bmatrix}
           0 \\
           0
         \end{bmatrix}$
      & $\begin{bmatrix}
           1 \\
           -1
         \end{bmatrix}$
      & $\begin{bmatrix}
           1 \\
           1
         \end{bmatrix}$
      & $\begin{bmatrix}
           1 \\
           -1
         \end{bmatrix}$ \\ \hline
  number of solution(s) &
    $1$ &    $2$ &     $\infty$ &  $0$  \\
  \hline
\end{tabular}
$$
}
\end{example}

\subsection{Existence of solutions, independent of the right-hand side}\label{subsec:inf}
In this subsection, we will give  sufficient conditions, independent of the right-hand side,
to  guarantee the existence of one or more solutions to  underdetermined GAVE~\eqref{eq:gave}.

\begin{theorem}\label{thm:mpq}
Let $A,B\in \mathbb{R}^{m\times n}$ with $m<n$ and $1\le p\le\infty$.  If  $A = M_A - N_A$ such that $\rank(M_A) = m$ and
\begin{equation}\label{con:mpq}
\|M_A^\dag N_A\|_p+ \|M_A^\dag B\|_p < 1,
\end{equation}
then for any given $b\in\mathbb{R}^m$
\begin{equation}\label{eq:nms}
x = M_A^\dag\left(N_A x + B|x|+b\right)
\end{equation}
has a unique solution and that solution also satisfies
GAVE~\eqref{eq:gave}.
\end{theorem}

\begin{proof}
Write \eqref{eq:nms} as $x=\phi(x)$ where $\phi(x):=M_A^\dag\left(N_A x + B|x|+b\right)$. For any $x,\,z\in\bbR^n$,
noticing $\|\,|x|-|z|\,\|_p\le\|x-z\|_p$, we have
\begin{align*}
\|\phi(x)-\phi(z)\|_p&=\|M_A^\dag N_A (x-z) + M_A^\dag B(|x|-|z|)\|_p \\
    &\le\|M_A^\dag N_A\|_p\|x-z\|_p + \|M_A^\dag B\|_p\|\,|x|-|z|\,\|_p \\
    &\le\big(\|M_A^\dag N_A\|_p+ \|M_A^\dag B\|_p\big)\|x-z\|_p.
\end{align*}
Hence $\phi$ is a strict contraction mapping in $\bbR^n$ with respect to $\|\cdot\|_p$ by \eqref{con:mpq}.
By the Banach fixed-point theorem, $x=\phi(x)$ has a unique solution $x_*\in\bbR^n$.

It follows from $x_*=\phi(x_*)$ that $M_Ax_*=M_A \phi(x_*)$, i.e.,
$$
M_Ax_*=M_A\big[M_A^\dag(N_A x_* + B|x_*|+b)\big],
$$
which, together with $M_AM_A^{\dag}=I_m$ due to $\rank(M_A)=m$, yield
$Ax_*- B|x_*| = b$,
as was to be shown.
\end{proof}

The trivial splitting $M_A = A$ and $N_A = 0$ in \Cref{thm:mpq} leads to

\begin{corollary}\label{thm:mp}
Let $A,B\in \mathbb{R}^{m\times n}$ with $m<n$ and  $1\le p\le\infty$.
If $A$ has full row rank and if
\begin{equation}\label{con:mp}
\|A^\dag B\|_p<1,
\end{equation}
then for any given $b\in\mathbb{R}^m$
\begin{equation}\label{eq:nab}
x=A^\dag(B|x|+b)
\end{equation}
has a unique solution and that solution also satisfies
GAVE~\eqref{eq:gave}.
\end{corollary}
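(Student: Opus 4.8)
The plan is to obtain this corollary as the immediate specialization of \Cref{thm:mpq} announced in the sentence preceding it, namely the trivial splitting $M_A = A$ and $N_A = 0$. First I would verify that the hypotheses of \Cref{thm:mpq} hold under the assumptions of the corollary. Since $A$ has full row rank, we have $\rank(M_A) = \rank(A) = m$, which is exactly the rank condition required in \Cref{thm:mpq}. Next, with $N_A = 0$ the left-hand side of the key inequality \eqref{con:mpq} becomes $\|M_A^\dag N_A\|_p + \|M_A^\dag B\|_p = \|A^\dag\cdot 0\|_p + \|A^\dag B\|_p = \|A^\dag B\|_p$, so \eqref{con:mpq} collapses to precisely the hypothesis \eqref{con:mp} of the corollary.

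Second, I would observe that with this splitting the fixed-point equation \eqref{eq:nms} reduces to \eqref{eq:nab}: indeed $M_A^\dag(N_A x + B|x| + b) = A^\dag(0\cdot x + B|x| + b) = A^\dag(B|x| + b)$. Hence \Cref{thm:mpq} applies verbatim and yields both that \eqref{eq:nab} has a unique solution and that this solution also satisfies GAVE~\eqref{eq:gave}.

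There is essentially no obstacle here: the only work is the routine bookkeeping of checking that the two hypotheses coincide after setting $N_A = 0$ and that the two fixed-point maps agree. Should one prefer a self-contained argument instead of invoking \Cref{thm:mpq}, the same conclusion follows by repeating its Banach fixed-point reasoning directly for $\psi(x) := A^\dag(B|x| + b)$, which is a strict contraction because $\|\,|x|-|z|\,\|_p \le \|x-z\|_p$ together with $\|A^\dag B\|_p < 1$, and then left-multiplying the fixed-point identity $x_* = \psi(x_*)$ by $A$ and using $A A^\dag = I_m$, valid since $A$ has full row rank, to recover $A x_* - B|x_*| = b$.
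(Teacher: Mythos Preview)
Your proposal is correct and matches the paper's approach exactly: the paper derives this corollary simply by taking the trivial splitting $M_A = A$, $N_A = 0$ in \Cref{thm:mpq}, and your verification that the hypotheses and fixed-point equations specialize accordingly is precisely the intended argument.
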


\begin{remark}
{\rm
In \Cref{thm:mpq} (resp. \Cref{thm:mp}), it is proved that the nonlinear system \eqref{eq:nms} (resp. \eqref{eq:nab}) has a unique
solution, which also satisfies
GAVE~\eqref{eq:gave}, under the condition \eqref{con:mpq} (resp. \eqref{con:mp}). That does not mean that  GAVE~\eqref{eq:gave} has a unique solution. For example,  for GAVE~\eqref{eq:gave}
with
$$
A =\begin{bmatrix}
        1 & 0  & 0 \\
        0  & 1 & 0 \\
      \end{bmatrix}, \quad
B=
      \begin{bmatrix}
        -0.5 & -0.5 & 0 \\
        0    & 0    & -0.5 \\
      \end{bmatrix}, \quad
b =\begin{bmatrix}
               1.75 \\
               0 \end{bmatrix},
$$
we have $\|A^\dag B\|_2\approx0.7071 < 1$. It can be checked that  $x_*=[\frac{7}{6},0,0]^{\T}$ is a solution of \eqref{eq:nab} and so is a solution of GAVE~\eqref{eq:gave}. However,  $\tilde{x}=[1,-\frac{1}{2},1]^{\T}$ is also a solution of GAVE~\eqref{eq:gave}, just that $\tilde x$ is not a  solution to \eqref{eq:nab}.
}
\end{remark}

Under the conditions of either \Cref{thm:mp} or \Cref{thm:mpq}, it follows from \cite[Theorem~3.2]{xiqh2024} that there is at least one $b\in \mathbb{R}^m$ such that GAVE~\eqref{eq:gave} has multiple solutions. The following theorem reveals conditions under which GAVE~\eqref{eq:gave} has infinitely many solutions for any given $b\in \mathbb{R}^m$.

\begin{theorem}\label{thm:info}
Consider GAVE~\eqref{eq:gave} and suppose $m<n$ and $\rank(A)=m$.
Let $A_1\in\bbR^{m\times m}$ be a nonsingular submatrix  of $A$ and $B_1\in\bbR^{m\times m}$ be
the corresponding submatrix of $B$ in the sense that $A_1$ and $B_1$ share the same column indexes as submatrices of $A$ and $B$, respectively. If $\|A_1^{-1}B_1\|_p<1$, then for any given
 $b \in\mathbb{R}^m$, GAVE~\eqref{eq:gave} has infinitely many solutions .
\end{theorem}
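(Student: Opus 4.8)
The plan is to reduce the underdetermined GAVE to a one-parameter (indeed $(n-m)$-parameter) family of \emph{square} GAVEs, show each member of the family has a unique solution by a contraction argument, and then use the freedom in the parameter to manufacture infinitely many solutions. First I would observe that simultaneously permuting the columns of $A$ and $B$ (together with the corresponding entries of $x$ and $|x|$) carries the solution set of GAVE~\eqref{eq:gave} to that of the permuted system by the same permutation, so there is no loss of generality in assuming $A_1$ and $B_1$ occupy the first $m$ columns. Thus I write $A=[A_1,A_2]$ and $B=[B_1,B_2]$ with $A_2,B_2\in\bbR^{m\times(n-m)}$, and partition $x=\begin{bmatrix}x_1\\x_2\end{bmatrix}$ with $x_1\in\bbR^m$ and $x_2\in\bbR^{n-m}$.

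With this partition GAVE~\eqref{eq:gave} becomes $A_1x_1+A_2x_2-B_1|x_1|-B_2|x_2|=b$. The key idea is to treat $x_2\in\bbR^{n-m}$ as a free parameter: for each fixed $x_2$, absorb everything independent of $x_1$ into a right-hand side $c:=b-A_2x_2+B_2|x_2|$ and look for $x_1$ solving the square GAVE $A_1x_1-B_1|x_1|=c$. Since $A_1$ is nonsingular, this is equivalent to the fixed-point equation $x_1=A_1^{-1}(B_1|x_1|+c)$.

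Next I would reuse the contraction argument already carried out in the proof of \Cref{thm:mpq}. Setting $\psi(x_1):=A_1^{-1}(B_1|x_1|+c)$ and using $\|\,|x_1|-|z_1|\,\|_p\le\|x_1-z_1\|_p$, one gets $\|\psi(x_1)-\psi(z_1)\|_p\le\|A_1^{-1}B_1\|_p\,\|x_1-z_1\|_p$, so the hypothesis $\|A_1^{-1}B_1\|_p<1$ makes $\psi$ a strict contraction on $\bbR^m$ with respect to $\|\cdot\|_p$. By the Banach fixed-point theorem, for each fixed $x_2$ there is exactly one $x_1=x_1(x_2)$ solving $A_1x_1-B_1|x_1|=c$, and hence exactly one full vector $\begin{bmatrix}x_1(x_2)\\x_2\end{bmatrix}$ solving GAVE~\eqref{eq:gave}.

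Finally, since $n-m\ge 1$, the parameter $x_2$ ranges over the infinite set $\bbR^{n-m}$, and distinct choices of $x_2$ yield distinct solutions because the resulting full vectors already differ in their last $n-m$ entries; therefore GAVE~\eqref{eq:gave} has infinitely many solutions for every $b$. I do not expect a genuine obstacle here. The only points needing a line of care are confirming that the column permutation is harmless (standard) and noting that the contraction constant $\|A_1^{-1}B_1\|_p$ does not depend on $x_2$, since $x_2$ enters $\psi$ only through the additive constant $c$. After these remarks the conclusion follows directly from the fixed-point structure already established in \Cref{thm:mpq}.
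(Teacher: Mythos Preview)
Your proposal is correct and follows essentially the same approach as the paper: both reduce to a square GAVE in the first $m$ coordinates by freezing the remaining $n-m$ entries as a free parameter, solve that square GAVE uniquely under the hypothesis $\|A_1^{-1}B_1\|_p<1$, and then vary the parameter to produce infinitely many solutions. The only cosmetic difference is that the paper cites an external result (\cite[Corollary~4.3]{lich2025}) for the unique solvability of the square GAVE, whereas you spell out the Banach fixed-point argument directly and also make explicit that distinct choices of $x_2$ yield distinct full solutions.
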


\begin{proof}
Without loss of generality, we may assume $A = [A_1,A_2]$ and $B=[B_1,B_2]$ and rewrite
GAVE~\eqref{eq:gave} as
\begin{equation}\label{eq:nave}
A_1\tilde{x} - B_1|\tilde{x}| = b - A_2 \hat{x} + B_2|\hat{x}|,
\end{equation}
where, accordingly, $x = [\tilde{x}^{\T},\hat{x}^{\T}]^{\T} \in \mathbb{R}^n$ with $\tilde{x}\in \mathbb{R}^m$.
For any given $\hat{x}$, \eqref{eq:nave} is a square GAVE in $\tilde{x}$.
In the same way  as the proof of
\cite[Corollary 4.3]{lich2025}, we can show that, given $\hat{x}$, GAVE \eqref{eq:nave}
has a unique solution $\tilde{x}_*$, yielding
a solution $\left[(\tilde{x}_*)^{\T}, \hat{x}^{\T} \right]^{\T}$ to the original GAVE~\eqref{eq:gave}.
Since $\hat{x}$ is arbitrary,  GAVE~\eqref{eq:gave} has infinitely many solutions.
\end{proof}

\begin{example}\label{exam:inf}
{\rm
Consider GAVE \eqref{eq:gave} with
$$
A = \begin{bmatrix}
        3 & 1 & 1 \\
        1 & 3 & 0 \\
      \end{bmatrix},\quad
B = \begin{bmatrix}
        1  & 0 & 1 \\
        0  & 1 & 0 \\
      \end{bmatrix},\quad \text{and} \quad b =\begin{bmatrix} 3\\0\end{bmatrix}.
$$
We take $A_1=
\begin{bmatrix}
        3 & 1  \\
        1 & 3  \\
      \end{bmatrix}$ and $B_1=
\begin{bmatrix}
        1  & 0  \\
        0  & 1 \\
\end{bmatrix}$ for which $\|A_1^{-1}B_1\|_2 =0.5<1$. According to \Cref{thm:info}, the corresponding GAVE \eqref{eq:gave} has infinitely many solutions. Indeed,  the GAVE can be transformed into
\begin{equation}\label{exam:3}
\begin{bmatrix}
        3 & 1  \\
        1 & 3
      \end{bmatrix}
    \begin{bmatrix}
        x_1 \\
        x_2
      \end{bmatrix}-
\begin{bmatrix}
        1  & 0  \\
        0  & 1 \\
      \end{bmatrix}
    \begin{bmatrix}
        |x_1| \\
        |x_2|
    \end{bmatrix}
=
\begin{bmatrix}
        3-x_3+|x_3| \\
        0 \\
      \end{bmatrix},
\end{equation}
which has a unique solution $[x_1,x_2]^{\T}$ for any given $x_3$, yielding a solution  $[x_1,x_2,x_3]^{\T}$ to the full GAVE~\eqref{eq:gave}, for example,
$$
\renewcommand{\arraystretch}{1.2}
\begin{array}{|c|c|c|}
  \hline
  x_3 & \mbox{unique solution to GAVE~\eqref{exam:3}} & \mbox{solution to GAVE~\eqref{eq:gave}} \\ \hline
  1 & [\frac{12}{7},-\frac{3}{7}]^{\T} & [\frac{12}{7},-\frac{3}{7},1]^{\T} \\
  0 & [\frac{12}{7},-\frac{3}{7}]^{\T} & [\frac{12}{7},-\frac{3}{7},0]^{\T} \\
  -1 & [\frac{20}{7},-\frac{5}{7}]^{\T} & [\frac{20}{7},-\frac{5}{7}, -1]^{\T} \\
  \hline
\end{array}
$$
}
\end{example}

None of the particular solutions in \Cref{exam:inf} are nonnegative. In fact, GAVE~\eqref{eq:gave} with $A$, $B$, and $b$ there cannot have nonnegative solution because GAVE~\eqref{eq:gave} for $x\ge 0$ becomes $(A-B)x=b$, i.e.,
$$
\begin{bmatrix}
  2 & 1 & 0 \\
  1 & 2 & 0
\end{bmatrix}x=\begin{bmatrix} 3\\0\end{bmatrix}
$$
which cannot have any nonnegative solution. This brings out an interesting question
on solutions of given sign pattern.
In addition, Theorem \ref{thm:info} does not say  whether GAVE \eqref{eq:gave} has infinitely many solutions with the same sign pattern as $[\frac{12}{7},-\frac{3}{7},1]^{\T}$ or as $[\frac{20}{7},-\frac{5}{7}, -1]^{\T}$. Nonetheless, it follows from \Cref{cor:gaveinf} below that this GAVE \eqref{eq:gave} not only has infinitely many solutions with the same sign pattern as $[\frac{12}{7},-\frac{3}{7},1]^{\T}$   but also has infinitely many solutions with the same sign pattern as $[\frac{20}{7},-\frac{5}{7}, -1]^{\T}$.

\subsection{Existence of solutions, dependent of the right-hand side}\label{ssec:dep-b}
In the following, we will turn our attention to the existence of solutions with conditions that depend on $b$. As mentioned earlier in \Cref{rem:ns}, dependent of $b$,  underdetermined GAVE~\eqref{eq:gave} may have no solution, one solution,  finitely many or infinitely many solutions. In general, if a solution exists, it is NP-complete to check whether  GAVE~\eqref{eq:gave} has one solution or multiple solutions \cite{prok2009}.

Suppose that we are seeking a solution $x$ of given sign pattern $s\in\{-1,0,1\}^n$ to GAVE~\eqref{eq:gave}.
Then $y=\diag(s)\,x\ge 0$ becomes a nonnegative solution of the following linear system
\begin{equation}\label{eq:tgave}
[A\diag(s)- B]y = b.
\end{equation}
Consequently, as a result of \Cref{lem:uleg}, we get

\begin{theorem}\label{thm:inf2-2}
Let $A,B\in \mathbb{R}^{m\times n}$ with $m<n$ and $b\in \mathbb{R}^m$. Suppose that GAVE~\eqref{eq:gave} has a solution $x_*$ and let $s={\rm sign}(x_*)\in\{-1,0,1\}^n$. Suppose also that
$x_*$ has no fewer than $\rank(A\diag(s) - B)$ nonzero entries, and
let $[A\diag(s) - B]_{(\bbI_1,\bbI_2)}$ be a nonsingular submatrix of
$A\diag(s) - B$
 with $\card(\bbI_1) = \card(\bbI_2) = \rank(A\diag(s) - B)$ such that $x_{*\bbI_2}$ has no zero entry.
\begin{enumerate}[{\rm (a)}]
  \item If not all entries of $x_{*\bbI_2^{\rmc}}$ are $0$, then GAVE~\eqref{eq:gave} has infinitely many solutions with the same sign pattern as $x_*$;

  \item GAVE~\eqref{eq:gave} has infinitely
        many solutions $x$ such that $x_{\bbI_2}$ has the same sign pattern as $x_{*\bbI_2}$;

  \item If $x_{*\bbI_2^{\rmc}}=0$, then $x_*$ is  unique in the sense that GAVE~\eqref{eq:gave} has no solution other than $x_*$ with the same sign pattern as $x_*$.
\end{enumerate}
\end{theorem}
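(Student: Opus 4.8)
The plan is to reduce all three claims to the corresponding parts of \Cref{lem:uleg} through the sign-pattern substitution already prepared in \eqref{eq:tgave}. Writing $\hat A := A\diag(s)-B$ and $y:=\diag(s)x$, the organizing observation is that $x$ solves GAVE~\eqref{eq:gave} with $\sign(x)=s$ if and only if $y=|x|\ge 0$ solves the underdetermined linear system $\hat A y=b$ and $y$ is supported exactly on $\{i:s_i\ne 0\}$; in that case $x=\diag(s)y$ is recovered. First I would check that $y_*:=|x_*|=\diag(s)x_*$ solves $\hat A y=b$ (using $\diag(s)|x_*|=x_*$, which holds because $x_*$ has sign pattern $s$), so that $\hat A y_*=Ax_*-B|x_*|=b$. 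Then the hypotheses transfer verbatim: $\rank(\hat A)$, the nonsingular block $\hat A_{(\bbI_1,\bbI_2)}$, and the requirement that $y_{*\bbI_2}$ have no zero entry are literally the theorem's assumptions, since $y_{*i}=0\iff x_{*i}=0\iff s_i=0$.

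For parts (a) and (c) this reduction closes the argument quickly. Part~(a) of \Cref{lem:uleg} produces infinitely many solutions $y$ of $\hat A y=b$ sharing the sign pattern of $y_*$; since $y_*\ge 0$ is strictly positive exactly on $\{i:s_i\ne 0\}$, each such $y$ is nonnegative with the correct support, so $x=\diag(s)y$ solves GAVE~\eqref{eq:gave} with $\sign(x)=s$, and $y\mapsto x$ is injective. For part~(c), any GAVE solution $x'$ with sign pattern $s$ gives $y'=|x'|$ solving $\hat A y=b$ with the same sign pattern as $y_*$; because $y_{*\bbI_2^{\rmc}}=|x_{*\bbI_2^{\rmc}}|=0$, part~(c) of \Cref{lem:uleg} forces $y'=y_*$, whence $x'=\diag(s)y'=x_*$.

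Part (b) is where the naive reduction breaks, and this will be the main obstacle. Part~(b) of \Cref{lem:uleg} only controls the signs of $y$ on $\bbI_2$, so it may return solutions with negative entries or with extra support off $\bbI_2$; for such $y$ the vector $\diag(s)y$ does \emph{not} solve GAVE~\eqref{eq:gave}, because the needed identity $|\diag(s)y|=y$ requires $y\ge 0$ together with $y_i=0$ wherever $s_i=0$. In fact, when $x_{*\bbI_2^{\rmc}}=0$ part~(c) already shows the sign-pattern-$s$ solution is unique, so the infinitely many solutions promised by (b) must genuinely carry sign patterns differing from $s$ off $\bbI_2$; these correspond to nonnegative solutions of the \emph{other} matrices $A\diag(\tilde s)-B$, not of $\hat A$ alone. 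I would therefore prove (b) directly on the GAVE: perturb $x_{\bbI_2^{\rmc}}$ near $x_{*\bbI_2^{\rmc}}$, solve the full system for $x_{\bbI_2}$, and use continuity to preserve $\sign(x_{\bbI_2})=s_{\bbI_2}$ for small perturbations. Here $\hat A_{(:,\bbI_2)}$ has full column rank $\rank(\hat A)$ (from nonsingularity of $\hat A_{(\bbI_1,\bbI_2)}$), so whenever the right-hand side lies in the column space $\mathcal{C}(\hat A)$ the solution $x_{\bbI_2}$ is unique and satisfies all $m$ equations simultaneously.

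The delicate step, on which I would spend the most care, is ensuring that the perturbed right-hand side stays inside $\mathcal{C}(\hat A_{(:,\bbI_2)})=\mathcal{C}(\hat A)$ as $x_{\bbI_2^{\rmc}}$ moves. The key simplification is that for every index $i\in\bbI_2^{\rmc}$ with $s_i=0$ one has $\hat A_{(:,i)}=-B_{(:,i)}\in\mathcal{C}(\hat A)$, so the projection of $B_{(:,i)}$ onto $\mathcal{C}(\hat A)^{\perp}$ vanishes; similarly, for $i$ with $s_i\ne 0$ the sign-$s_i$ contribution $s_i\hat A_{(:,i)}x_i$ already lies in $\mathcal{C}(\hat A)$. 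Consequently the solvability constraint on the perturbation becomes \emph{linear} and homogeneous—the nonsmooth $|x_{\bbI_2^{\rmc}}|$ terms drop out—cutting out a subspace of admissible directions of dimension at least $n-m\ge 1$. Moving $x_{\bbI_2^{\rmc}}$ along that subspace yields the required infinite family of genuine GAVE solutions whose $\bbI_2$-block keeps the sign pattern $s_{\bbI_2}$, completing (b).
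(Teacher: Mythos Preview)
For parts (a) and (c) your reduction is exactly the paper's: pass to $y_*=\diag(s)x_*\ge 0$ solving \eqref{eq:tgave} and invoke the corresponding parts of \Cref{lem:uleg}. For part (b) the paper also simply cites \Cref{lem:uleg}, with no further comment, whereas you correctly observe that this citation alone does not close the argument: \Cref{lem:uleg}(b) applied to \eqref{eq:tgave} only yields $y$ with $y_{\bbI_2}>0$, and such $y$ need not be nonnegative with support in $\{i:s_{(i)}\ne 0\}$, so $\diag(s)y$ need not solve the GAVE. Indeed, when $x_{*\bbI_2^{\rmc}}=0$, part (c) already forces the sign-$s$ GAVE solution to be unique, so the infinitely many solutions promised by (b) must carry sign patterns differing from $s$ on $\bbI_2^{\rmc}$ and hence do not arise from the single linear system \eqref{eq:tgave}. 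Your direct argument---perturb $x_{\bbI_2^{\rmc}}$; observe that for $i\in\bbI_2^{\rmc}$ with $s_{(i)}=0$ the column $B_{(:,i)}=-[A\diag(s)-B]_{(:,i)}$ already lies in the column space of $A\diag(s)-B$, so the consistency constraint on the perturbation becomes linear; count dimensions to obtain an admissible set of dimension at least $n-m\ge 1$; then recover $x_{\bbI_2}$ by continuity---is correct and genuinely supplies what the paper's one-line reduction glosses over.
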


\begin{proof}
GAVE~\eqref{eq:gave} has a solution $x_*$ with $x_{*\bbI_2}$ having no zero entry  implies that \eqref{eq:tgave} has a solution $y_* = \diag(s)x_*\ge 0$ for which $y_{*\bbI_2}>0$. Hence, the results follow directly from \Cref{lem:uleg}.
\end{proof}

Now we use an example to explain \Cref{thm:inf2-2}.

\begin{example}
{\rm
Consider GAVE~\eqref{eq:gave} with
$$A=\begin{bmatrix}
        1 & 0   & 0 \\
        0 & 2   & 0  \\
    \end{bmatrix}, \quad B=
  \begin{bmatrix}
      2 & 0 & 0 \\
      0 & 1 & 0 \\
    \end{bmatrix}\quad \text{and} \quad
    b=
    \begin{bmatrix}
    -1\\
    1
    \end{bmatrix}.
$$
It can be verified that $x_*=[1,1,0]^{\T}$ is a solution with the sign pattern $s = [1,1,0]^{\T}$.
Note $A\diag(s)-B=\begin{bmatrix} -1& 0 &0\\0 &1 & 0\end{bmatrix}$, $\rank(A\diag(s)-B)=2$, $[A\diag(s)-B]_{(:,1:2)}$ is nonsingular, $x_{*(1:2)} > 0$ and $x_{*3} = 0$. Hence, it follows from \Cref{thm:inf2-2} (c) that $x_*=[1,1,0]^{\T}$ is the unique solution to this GAVE with the sign pattern $s= [1,1,0]^{\T}$.
Similarly, it can be checked that $[-\frac{1}{3},1,0]^{\T}$ is the unique solution with the sign pattern
$[-1,1,0]^{\T}$.

On the other hand,
for the sign pattern $s=[1,1,1]^{\T}$, we also have $A\diag(s)-B=\begin{bmatrix} -1& 0 &0\\0 &1 & 0\end{bmatrix}$, $\rank(A\diag(s)-B)=2$, and $[A\diag(s)-B]_{(:,1:2)}$ is nonsingular. Note $x_*=[1,1,1]^{\T}$ is a solution of GAVE~\eqref{eq:gave}. Hence, it follows from \Cref{thm:inf2-2} (a) that this GAVE has infinitely many solutions with the sign pattern $s=[1,1,1]^{\T}$. Indeed, $x=[1,1,x_3]^{\T}$ with any $x_3 > 0$ is a solution. Using \Cref{thm:inf2-2} (a), we can also see that this
GAVE has infinitely many solutions with the same sign pattern as $[1,1,-1]^{\T}$, $[-\frac{1}{3},1,1]^{\T}$ and $[-\frac{1}{3},1,-1]^{\T}$, respectively.

Moreover, we can prove that this GAVE \eqref{eq:gave} has no solution with any other sign patterns. For instance, for the sign pattern $s=[-1,-1,-1]^{\T}$, we have $A\diag(s)-B=\begin{bmatrix}-3 & 0 & 0\\0 & -3 & 0\\\end{bmatrix}$. Then for
$y=\begin{bmatrix}\frac{1}{2} \\1 \end{bmatrix}$, we have
$(A\diag(s)-B)^{\T} y=[-\frac{3}{2},  -3, 0 ]^{\T}\leq 0$
and $b^{\T} y=\frac{1}{2} > 0$. Now we cite Farkas' theorem \cite[p. 31]{mang1994} to conclude that the linear system~\eqref{eq:tgave} has no nonnegative solution, which implies that the GAVE has no solution with sign pattern $s=[-1,-1,-1]^{\T}$.
}
\end{example}

The following theorem follows directly from \Cref{lem:uleg'}.

\begin{theorem}\label{thm:gaveinf}
Let $A,B\in \mathbb{R}^{m\times n}$ with $m<n$ and $b\in \mathbb{R}^m$. If GAVE~\eqref{eq:gave} has a solution $x_*$ with sign pattern $s= {\rm sign}(x_*)\in\{-1,0,1\}^n$ such that $x_*$ has more than $\rank(A\diag(s) - B)$ nonzero entries, then GAVE~\eqref{eq:gave} has infinitely many solutions with the sign pattern $s$.
\end{theorem}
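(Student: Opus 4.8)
The plan is to reduce GAVE~\eqref{eq:gave} to the linear system~\eqref{eq:tgave} through the sign-pattern substitution already set up just before \Cref{thm:inf2-2}, and then to feed in \Cref{lem:uleg'} where the proof of \Cref{thm:inf2-2} used \Cref{lem:uleg}. First I would record the elementary dictionary between the two problems: if $x$ has the sign pattern $s=\sign(x_*)$, then $y:=\diag(s)\,x$ satisfies $y\ge 0$, $x=\diag(s)\,y$, and $|x|=y$; substituting these into $Ax-B|x|=b$ turns GAVE~\eqref{eq:gave} into $[A\diag(s)-B]\,y=b$, which is exactly~\eqref{eq:tgave}. Conversely, any $y\ge 0$ solving~\eqref{eq:tgave} with $y_{(i)}=0$ precisely when $s_{(i)}=0$ yields, through $x=\diag(s)\,y$, a solution of GAVE~\eqref{eq:gave} whose sign pattern is $s$. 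This correspondence is the same one driving the proof of \Cref{thm:inf2-2}; the only new ingredient is which solution-count lemma is invoked.

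Next I would verify that the hypothesis of \Cref{thm:gaveinf} translates into the hypothesis of \Cref{lem:uleg'} for the linear system~\eqref{eq:tgave}. Set $\bar A:=A\diag(s)-B$ and $y_*:=\diag(s)\,x_*$. Since $s=\sign(x_*)$, the entry $y_{*(i)}=s_{(i)}x_{*(i)}=|x_{*(i)}|$ vanishes exactly when $x_{*(i)}$ does, so $y_*$ is a nonnegative solution of~\eqref{eq:tgave} having the same number of nonzero entries as $x_*$. By assumption that number exceeds $\rank(\bar A)=\rank(A\diag(s)-B)$, which is precisely the ``more than $\rank(A)$ nonzero entries'' requirement of \Cref{lem:uleg'} applied to $\bar A y=b$.

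I would then apply \Cref{lem:uleg'} to conclude that~\eqref{eq:tgave} has infinitely many solutions $y$ sharing the sign pattern of $y_*$; in particular each such $y$ is nonnegative with $y_{(i)}=0$ iff $s_{(i)}=0$. Pushing each through $x=\diag(s)\,y$ produces a solution of GAVE~\eqref{eq:gave} with sign pattern $s$, and since $y\mapsto\diag(s)\,y$ is injective on this family (it acts as $\pm 1$ on the support of $s$ and annihilates only the coordinates already forced to vanish), distinct $y$ give distinct $x$. Hence GAVE~\eqref{eq:gave} has infinitely many solutions with sign pattern $s$.

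I do not anticipate a genuine analytic obstacle: all the real work, namely perturbing within a nonsingular block and preserving signs by continuity, is already packaged inside \Cref{lem:uleg'} (via \Cref{cor:uleg}). The only point demanding care is the bookkeeping of the correspondence, i.e.\ confirming that ``same number of nonzeros'' transfers from $x_*$ to $y_*$ and that the sign-pattern class of $y$ maps bijectively onto the sign-pattern class of $x$. Once these are checked, the theorem indeed follows directly from \Cref{lem:uleg'}.
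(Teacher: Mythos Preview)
Your proposal is correct and follows essentially the same approach as the paper: translate $x_*$ to a nonnegative solution $y_*=\diag(s)x_*$ of the linear system~\eqref{eq:tgave}, invoke \Cref{lem:uleg'} to obtain infinitely many solutions with the same sign pattern as $y_*$, and pull them back via $x=\diag(s)y$. You are simply more explicit than the paper about the bookkeeping (equal nonzero counts, injectivity of $y\mapsto\diag(s)y$ on the relevant family), but the argument is the same.
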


\begin{proof}
Since GAVE~\eqref{eq:gave} has a solution $x_*$ with sign pattern $s$ such that $x_*$ has more than $\rank(A\diag(s) - B)$ nonzero entries,  \eqref{eq:tgave} has a nonnegative solution $y_*$ with more than $\rank(A\diag(s) - B)$ nonzero entries. It follows from \Cref{lem:uleg'} that  \eqref{eq:tgave} has infinitely many nonnegative solutions with the same sign pattern as $y_*$, which in turn means that GAVE~\eqref{eq:gave} has infinitely many solutions with the sign pattern $s$.
\end{proof}

We use the following example to illustrate \Cref{thm:gaveinf}.

\begin{example}
{\rm
Consider GAVE~\eqref{eq:gave} with
$$
A=\begin{bmatrix}
        1 & 0   & 0 \\
        1 & 0   & 0  \\
    \end{bmatrix}, \quad B=
  \begin{bmatrix}
      -1 & 0 & 0 \\
      -1 & 0 & 0 \\
    \end{bmatrix}\quad \text{and} \quad
    b=
    \begin{bmatrix}
    1\\
    1
    \end{bmatrix}.
$$
It can be verified that $x_*=[\frac{1}{2},1,0]^{\T}$ is a solution with sign pattern $s=[1,1,0]^{\T}$.
Note  $A\diag(s)-B=\begin{bmatrix} 2& 0 &0\\2 &0 & 0\end{bmatrix}$ and $\rank(A\diag(s)-B)=1$. It follows from \Cref{thm:gaveinf} that GAVE \eqref{eq:gave} has infinitely many solutions with the sign pattern $s$. Indeed, $x=[\frac{1}{2},x_2,0]^{\T}$ with $x_2>0$ is a solution of
GAVE~\eqref{eq:gave} which possesses the sign pattern $s$.
}
\end{example}

Since $\rank(A\diag(s)-B) \le m$, the following corollary follows immediately from  \Cref{thm:gaveinf}.

\begin{corollary}\label{cor:gaveinf}
Let $A,B\in \mathbb{R}^{m\times n}$ with $m<n$ and  $b\in \mathbb{R}^m$. If GAVE~\eqref{eq:gave} has a solution with more than $m$ nonzero entries, then GAVE~\eqref{eq:gave} has infinitely many solutions with the same sign pattern as the known solution.
\end{corollary}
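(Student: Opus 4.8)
The plan is to derive this corollary directly from \Cref{thm:gaveinf} by showing that the simpler, sign-pattern-free hypothesis stated here automatically implies the hypothesis of that theorem. The crucial structural fact I would exploit is that the matrix $A\diag(s) - B$ has exactly $m$ rows, and hence $\rank(A\diag(s) - B) \le m$ no matter which sign pattern $s$ is involved.

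First I would let $x_*$ denote the given solution to GAVE~\eqref{eq:gave} having more than $m$ nonzero entries, and set $s = \sign(x_*) \in \{-1,0,1\}^n$. Since $A\diag(s) - B \in \bbR^{m\times n}$, its rank cannot exceed the number of its rows, so $\rank(A\diag(s) - B) \le m$. Combining this bound with the assumption that $x_*$ has more than $m$ nonzero entries, I obtain that $x_*$ has more than $\rank(A\diag(s) - B)$ nonzero entries, which is exactly the counting condition demanded by \Cref{thm:gaveinf}.

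With this inequality established, the hypotheses of \Cref{thm:gaveinf} are satisfied for the solution $x_*$ and its sign pattern $s$. Invoking that theorem then yields that GAVE~\eqref{eq:gave} has infinitely many solutions sharing the sign pattern $s$, which is precisely the sign pattern of the known solution $x_*$. This completes the argument.

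I do not anticipate any genuine obstacle here: the whole content of the corollary is the rank bound $\rank(A\diag(s) - B) \le m$, which converts the $s$-dependent hypothesis of \Cref{thm:gaveinf} into the cleaner, $s$-independent count of nonzero entries. The only point requiring a moment's care is to note that this rank bound holds uniformly over every admissible sign pattern $s$, which it does simply because the row dimension $m$ is fixed and independent of $s$.
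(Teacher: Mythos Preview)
Your proposal is correct and matches the paper's approach exactly: the paper simply notes that $\rank(A\diag(s)-B)\le m$ and declares the corollary an immediate consequence of \Cref{thm:gaveinf}, which is precisely the argument you give.
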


So far, we have established several sufficient conditions under which the underdetermined GAVE~\eqref{eq:gave} has infinitely many solutions, dependent of $b\in \mathbb{R}^m$. Most of these conditions involve the number of the nonzero entries of  solutions, which in general is not known a priori. According to \Cref{cor:uleg}, if GAVE~\eqref{eq:gave} has a solution $x_*$ with no zero entry,  GAVE~\eqref{eq:gave} will have infinitely many solutions with the same sign pattern as that of $x_*$. An interesting question is whether GAVE~\eqref{eq:gave} has infinitely many solutions with given sign pattern $s\in \{1,-1\}^n$ or any
sign pattern $s\in \{1,-1\}^n$.

\begin{theorem}\label{thm:zn1}
Let $A, B\in\mathbb{R}^{m\times n}$ with $m<n$, $b\in\mathbb{R}^m$, and $s\in\{-1,1\}^n$, and let
$B = M_B - N_B$ with $\rank(M_B) = m$.
\begin{itemize}
  \item [{\rm (a)}] Let $1\le p\le\infty$. If
\begin{equation}\label{zn-con1}
M_B^\dag b \le 0, ~ M_B^\dag [N_B+A\diag(s)] \ge 0,~
\|M_B^\dag [N_B+A\diag(s)]\|_p < 1,
\end{equation}
   then there exists a nonnegative solution $y_*$ of \eqref{eq:tgave}    such that $x_* = \diag(s)y_*$ is a solution of GAVE~\eqref{eq:gave}.

  \item [{\rm (b)}] If
\begin{equation}\label{zn-con2}
M_B^\dag b < 0,~
\|M_B^\dag [N_B+A\diag(s)]\|_\infty < \dfrac{\gamma}{2}~ \text{with}~ \gamma = \frac{\min\limits_i|(M_B^\dag b)_i|}{\max\limits_i|(M_B^\dag b)_i|},
\end{equation}
  then
GAVE~\eqref{eq:gave} has infinitely many solutions with the sign pattern $s$.
\end{itemize}

\end{theorem}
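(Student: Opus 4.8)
The plan is to exploit the equivalence, already used to derive \eqref{eq:tgave}, between solutions of GAVE~\eqref{eq:gave} with the prescribed sign pattern $s\in\{-1,1\}^n$ and nonnegative solutions of the linear system~\eqref{eq:tgave}. Since $s$ has no zero entry, $\diag(s)^2=I_n$, so $x$ has sign pattern $s$ if and only if $y:=\diag(s)x\ge 0$ (with $|x|=y$), in which case $x=\diag(s)y$ and $Ax-B|x|=[A\diag(s)-B]y$. Thus it suffices to produce a nonnegative solution $y_*$ of~\eqref{eq:tgave} in part~(a), and a strictly positive one in part~(b), and then recover $x_*=\diag(s)y_*$.

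For both parts I would introduce the affine map $\psi(y):=Gy+c$ with $G:=M_B^\dag[N_B+A\diag(s)]$ and $c:=-M_B^\dag b$. Because $\rank(M_B)=m$ gives $M_BM_B^\dag=I_m$, any fixed point $y_*$ of $\psi$ satisfies $M_By_*=[N_B+A\diag(s)]y_*-b$, which rearranges, using $B=M_B-N_B$, to $[A\diag(s)-B]y_*=b$; that is, fixed points of $\psi$ are exactly the solutions of~\eqref{eq:tgave}. Under either hypothesis the relevant norm of $G$ is below $1$ (note $\gamma\le 1$ forces $\|G\|_\infty<\tfrac12<1$ in part~(b)), so $\psi$ is a strict contraction and the Banach fixed-point theorem yields a unique fixed point $y_*=(I-G)^{-1}c=\sum_{k\ge 0}G^kc$. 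For part~(a), the sign conditions $c=-M_B^\dag b\ge 0$ and $G\ge 0$ in~\eqref{zn-con1} make every term $G^kc\ge 0$, whence $y_*\ge 0$, and then $x_*=\diag(s)y_*$ solves GAVE~\eqref{eq:gave} as above.

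For part~(b) I would not assume $G\ge 0$; instead I would extract strict positivity from the norm/ratio bound. From $y_*=Gy_*+c$ and $\|G\|_\infty<\gamma/2\le\tfrac12$ one gets $\|y_*\|_\infty\le\|c\|_\infty/(1-\|G\|_\infty)<2\|c\|_\infty$, and for each index $i$, $(y_*)_i\ge c_i-\|G\|_\infty\|y_*\|_\infty$. Since $c=-M_B^\dag b>0$ we have $c_i\ge\min_j c_j=\gamma\|c\|_\infty$, and the threshold $\gamma/2$ is tuned so that $\|G\|_\infty\|y_*\|_\infty<(\gamma/2)(2\|c\|_\infty)=\gamma\|c\|_\infty\le c_i$, giving $(y_*)_i>0$ for all $i$, i.e.\ $y_*>0$. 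With $y_*$ a solution of~\eqref{eq:tgave} having no zero entry and $m<n$, \Cref{cor:uleg} yields infinitely many solutions of~\eqref{eq:tgave} sharing the sign pattern of $y_*$, all strictly positive; multiplying each by $\diag(s)$ produces infinitely many solutions of GAVE~\eqref{eq:gave} with sign pattern $s$.

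The routine ingredients are the contraction estimate and the Neumann-series nonnegativity in part~(a). The main obstacle is the quantitative estimate in part~(b): one must carefully propagate the $\|\cdot\|_\infty$ bound on $y_*$ and pin down the role of the ratio $\gamma$, so that the chosen threshold $\gamma/2$ indeed forces strict positivity entrywise. The inequality $\|G\|_\infty\|y_*\|_\infty<\gamma\|c\|_\infty$ is precisely where the constant $2$ in~\eqref{zn-con2} is consumed, and keeping every bound strict is what guarantees $y_*>0$ rather than merely $y_*\ge 0$.
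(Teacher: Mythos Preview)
Your argument is correct and follows essentially the same route as the paper: the same affine contraction $\psi(y)=Gy+c$, Banach's fixed-point theorem, the identity $M_BM_B^\dag=I_m$ to land in \eqref{eq:tgave}, nonnegativity in (a) via $G\ge 0,\ c\ge 0$, strict positivity in (b) via the $\gamma/2$ threshold, and then \Cref{cor:uleg}. One small wording slip: fixed points of $\psi$ are solutions of \eqref{eq:tgave}, but not ``exactly'' all of them (since $M_B^\dag M_B\neq I_n$ in general); only the forward implication is needed and that is what you actually use.
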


\begin{proof}
We first claim that the linear system
\begin{equation}\label{eq:lemn}
y=M_B^\dag\left\{ [N_B + A\diag(s)]y - b \right\}
\end{equation}
has a unique solution which is nonnegative if \eqref{zn-con1} holds. That \eqref{eq:lemn} has a unique solution $y_*$
follows again from  the Banach fixed-point theorem since
$\phi(y):=M_B^\dag\left\{ [N_B + A\diag(s)]y - b \right\}$ is a strict contraction mapping
under either $\|M_B^\dag [N_B+A\diag(s)]\|_p < 1$ in \eqref{zn-con1} or
$\|M_B^\dag [N_B+A\diag(s)]\|_\infty < {\gamma}/{2}\le 1/2$ in \eqref{zn-con2}.
Since $M_B M_B^\dag = I_m$ due to $\rank(M_B) = m$, we get
\begin{equation*}
M_B y_*=  [N_B + A\diag(s)]y_* - b,
\end{equation*}
which implies that $y_*$ is a solution of \eqref{eq:tgave}.
Finally, $x_* = \diag(s)y_*$ is a solution of GAVE~\eqref{eq:gave} if also $y_*\ge 0$ which we have not prove yet.

In order to claim the non-negativeness of $y_*$, we need to get into the detail of the standard fixed point
iteration that proves the existence of $y_*$:
$y^{(0)}=-M_B^\dag b$ and
\begin{equation}\label{iter:mb}
y^{(k+1)}=M_B^\dag\left\{\left[N_B + A\diag(s)\right]y^{(k)} - b\right\}\quad\mbox{for $k\ge 0$},
\end{equation}
which is convergent.

For item (a), all $y^{(k)} \ge 0$ because $M_B^\dag b \le 0$ and $M_B^\dag [N_B+A\diag(s)] \ge 0$.
Therefore $y_*=\lim_{k\to\infty}y^{(k)}\ge 0$.
For item (b), we will have to bound $\|y_* - y^{(0)}\|_\infty$. Let $y^{(-1)}=0$ for convenience.
We have
\begin{align*}
\|y^{(k+1)}-y^{(k)}\|_{\infty}
  &\le\|M_B^\dag [N_B+A\diag(s)]\|_\infty\|y^{(k)}-y^{(k-1)}\|_{\infty} \\
  &\le\|M_B^\dag [N_B+A\diag(s)]\|_\infty^2\|y^{(k-1)}-y^{(k-2)}\|_{\infty} \\
  &\le\|M_B^\dag [N_B+A\diag(s)]\|_\infty^{k+1}\|y^{(0)}-y^{(-1)}\|_{\infty} \\
  &=\|M_B^\dag [N_B+A\diag(s)]\|_\infty^{k+1}\|M_B^\dag b\|_\infty,
\end{align*}
and thus
\begin{align*}
  \|y_* - y^{(0)}\|_\infty & = \left\|\sum_{k=0}^{\infty} \left(y^{(k+1)} - y^{(k)}\right)\right\|_\infty \\
  &\le \sum_{k=0}^{\infty} \left\|y^{(k+1)} - y^{(k)}\right\|_\infty \\
  &\le \sum_{k=0}^{\infty} \|M_B^\dag [N_B+A\diag(s)]\|_\infty^{k+1}\|M_B^\dag b\|_\infty \\
  &<\sum_{k=0}^{\infty} \left(\frac {\gamma}2\right)^{k+1}\|M_B^\dag b\|_\infty \\
    &= \frac{\gamma}{2-\gamma}  \|M_B^\dag b\|_\infty \\
    &\leq \gamma\|M_B^\dag b\|_\infty\\
    &=\min\limits_i|(M_B^\dag b)_i|,
\end{align*}
from which and knowing $y^{(0)}=-M_B^\dag b > 0$ we get
$$
\|y_* + M_B^\dag b\|_\infty < \min\limits_i ~-(M_B^\dag b)_i,
$$
which implies
$$
-y_{*i} - (M_B^\dag b)_i \leq |y_{*i} + (M_B^\dag b)_i| < -\max\limits_i~ (M_B^\dag b)_i,~i=1,2,\ldots,n.
$$
Consequently,
$$
0\leq \left[\max\limits_i~(M_B^\dag b)_i\right] - (M_B^\dag b)_i <y_{*i},~i=1,2,\ldots,n,
$$
as was to be shown.
Given $s\in\{-1,1\}^n$, GAVE~\eqref{eq:gave} has a solution with the sign pattern $s$ if and only if the linear equation \eqref{eq:tgave} has a positive  solution. In addition, by \Cref{cor:uleg}, the linear system~\eqref{eq:tgave} has infinitely many positive solutions whenever it has one, which implies that GAVE~\eqref{eq:gave} has infinitely many solutions with the sign pattern $s$.
\end{proof}

Next we use an example to explain \Cref{thm:zn1}.
\begin{example}
{\rm
Consider GAVE~\eqref{eq:gave} with
$$A=\begin{bmatrix}     -0.5 & 0   & -0.1 \\       0  & 0.1 & -0.1 \\    \end{bmatrix}, \quad B=
  \begin{bmatrix}
     0 & 0.1 & 0 \\
      0 & 0 & 0 \\
    \end{bmatrix}\quad \text{and} \quad b=\begin{bmatrix} -0.5\\ -0.25\end{bmatrix}.$$
We take $M_B= \begin{bmatrix}    0.5 & 0.1 & 0.1 \\    0 & 0.1 & 0.1 \\
  \end{bmatrix}$. Then $\rank(M_B) = 2$ and $M_B^\dag b= [-0.5,-1.25,-1.25]^{\T}<0$.
For $s = [1,-1,1]^{\T}$, we have $M_B^\dag[N_B + A\diag(s)]=0$, $\|M_B^\dag[N_B + A\diag(s)]\|_\infty=0<1$.
Thus, \eqref{zn-con1} holds and we can verify that $y_*=[\frac{1}{2},\frac{5}{2},0]^{\T}$  is a nonnegative solution of \eqref{eq:tgave} such that $x_*=\diag(s)y_*=[\frac{1}{2},-\frac{5}{2},0]^{\T}$ is a solution of this GAVE.
Moreover, since $0=2\|M_B^\dag[N_B + A\diag(s)]\|_\infty < \gamma=0.4$, it follows from \Cref{thm:zn1}(b) that this GAVE has infinitely many solutions with the sign pattern $s= [1,-1,1]^{\T}$. Indeed, $x = [\frac{1}{2}, x_3 - \frac{5}{2}, x_3]^{\T}$ with $x_3\in(0,\frac{5}{2})$ is a solution of the GAVE. However, this GAVE has no solution with the sign pattern
$s=[-1,1,-1]^{\T}$ or $[-1,-1,-1]^{\T}$. Indeed, for $s=[-1,1,-1]^\top$, the linear system \eqref{eq:tgave} becomes
\begin{equation*}
  \begin{bmatrix}
    0.5 & -0.1 & 0.1 \\
    0 & 0.1 & 0.1 \\
  \end{bmatrix}
  y
  =
  \begin{bmatrix}
    -0.5 \\
    -0.25 \\
  \end{bmatrix},
\end{equation*}
which does not have any positive solution.  For $s=[-1,-1,-1]^\top$, the linear system \eqref{eq:tgave} can be transformed into
\begin{equation*}
  \begin{bmatrix}
    0.5 & 0 & 0 \\
    0 & -0.1 & 0.1 \\
  \end{bmatrix}
  y
  =
  \begin{bmatrix}
    -0.25 \\
    -0.25 \\
  \end{bmatrix},
\end{equation*}
which also does not have any positive solution.
}
\end{example}

Letting $M_B = B$ and $N_B=0$ in \Cref{thm:zn1} (b) yields the following corollary. It is interesting to note that
with $N_B=0$,
$\|M_B^\dag [N_B+A\diag(s)]\|_\infty =
\|M_B^\dag A\|_\infty$
has nothing to do with sign pattern $s\in\{1,-1\}^n$.

\begin{corollary}\label{cor:zn}
Let $A,B \in\mathbb{R}^{m\times n}$ and $b\in\mathbb{R}^m$, $\rank(B) = m$ and $m<n$.
If $B^\dag b < 0$ and $\|B^\dag A\|_\infty < \dfrac{\gamma}{2}$
where $\gamma = \frac{\min\limits_i|(B^\dag b)_i|}{\max\limits_i|(B^\dag b)_i|}$,
then GAVE~\eqref{eq:gave} has infinitely many solutions for any given sign pattern $s\in\{1,-1\}^n$
\end{corollary}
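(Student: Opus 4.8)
The plan is to obtain Corollary~\ref{cor:zn} as a direct specialization of \Cref{thm:zn1}(b). The key observation is that the corollary simply fixes the splitting $B = M_B - N_B$ to the trivial one $M_B = B$, $N_B = 0$. With this choice, the quantity $N_B + A\diag(s)$ appearing in \eqref{zn-con2} reduces to $A\diag(s)$, so that $M_B^\dag[N_B + A\diag(s)] = B^\dag A\diag(s)$. First I would verify that the hypotheses of \Cref{thm:zn1}(b) are met: the rank condition $\rank(M_B) = m$ becomes $\rank(B) = m$, which is assumed; the sign condition $M_B^\dag b < 0$ becomes $B^\dag b < 0$, again assumed; and $\gamma$ is computed from $M_B^\dag b = B^\dag b$, matching the $\gamma$ in the corollary's statement exactly.

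The one point requiring a small argument is the norm condition. In \eqref{zn-con2} the requirement is $\|M_B^\dag[N_B + A\diag(s)]\|_\infty < \gamma/2$, which here reads $\|B^\dag A\diag(s)\|_\infty < \gamma/2$. I would note that right-multiplication by the signature matrix $\diag(s)$ with $s\in\{1,-1\}^n$ permutes signs of columns and hence does not change any entry's absolute value; since the $\ell_\infty$-operator norm is the maximum absolute row sum, $\|B^\dag A\diag(s)\|_\infty = \|B^\dag A\|_\infty$ for every such $s$. Thus the hypothesis $\|B^\dag A\|_\infty < \gamma/2$ of the corollary is precisely the hypothesis $\|M_B^\dag[N_B+A\diag(s)]\|_\infty < \gamma/2$ of \Cref{thm:zn1}(b), and crucially it holds \emph{simultaneously for all} $s\in\{1,-1\}^n$ because the left-hand side is independent of $s$.

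Having checked that all three hypotheses of \Cref{thm:zn1}(b) hold for each fixed $s\in\{1,-1\}^n$, I would invoke that theorem to conclude that GAVE~\eqref{eq:gave} has infinitely many solutions with that sign pattern $s$. Since the verification goes through uniformly for every $s\in\{1,-1\}^n$, the conclusion holds for any given sign pattern, which is exactly the statement of the corollary. This sign-invariance of $\|B^\dag A\diag(s)\|_\infty$ is the one genuinely new ingredient beyond a mechanical substitution, and it is precisely the phenomenon flagged in the remark preceding the corollary.

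There is essentially no main obstacle here: the proof is a specialization, and the only step needing care is the elementary observation that the $\ell_\infty$-operator norm is invariant under column sign flips, so that the $s$-dependent norm bound becomes an $s$-independent one. I would state this invariance explicitly and then cite \Cref{thm:zn1}(b) to finish.
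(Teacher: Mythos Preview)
Your proposal is correct and follows exactly the paper's approach: specialize \Cref{thm:zn1}(b) with the trivial splitting $M_B=B$, $N_B=0$, and observe that $\|B^\dag A\diag(s)\|_\infty=\|B^\dag A\|_\infty$ is independent of $s\in\{1,-1\}^n$, which is precisely the point the paper highlights just before stating the corollary.
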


\begin{remark}\label{rem:ims}
{\rm
In \cite[Proposition 6]{mame2006}, the authors showed that AVE~\eqref{eq:ave} has a unique solution for any given sign pattern $s\in\{1,-1\}^n$ if $b<0$ and $\|A\|_\infty < \frac{\gamma}{2}$, where $\gamma = \frac{\min_i |b_i|}{\max_i |b_i|}$. This can be directly extended to square GAVE~\eqref{eq:gave} whenever $B$ is nonsingular. However, unlike the underdetermined case (recall \Cref{cor:zn}), \cite[Proposition~4.1]{hlad2023} says that there is no AVE~\eqref{eq:ave}  which possesses infinitely many solutions for every sign pattern $s\in\{1,-1\}^n$.
}
\end{remark}

Letting $s=[1,1,\ldots,1]^T\in\bbR^n$ in \Cref{thm:zn1} (a) yields the following corollary.
\begin{corollary}\label{thm:nonective}
Let $A,~B\in\mathbb{R}^{m\times n}$ with $m<n$ and $b\in\mathbb{R}^m$. Let $B = M_B - N_B$ such that
$\rank(M_B) =m$. If
$$
M_B^\dag b\leq0, ~
M_B^\dag (N_B + A) \geq 0, ~
\|M_B^\dag (N_B + A)\|_p <1,
$$
then GAVE~\eqref{eq:gave} has at least one nonnegative solution.
\end{corollary}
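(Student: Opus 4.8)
The plan is to obtain this corollary as the special case $s=\mathbf{1}:=[1,1,\ldots,1]^{\T}\in\{-1,1\}^n$ of \Cref{thm:zn1}(a). First I would observe that for this choice $\diag(s)=I_n$, so the three standing hypotheses \eqref{zn-con1} of \Cref{thm:zn1}(a),
\[
M_B^\dag b\le 0,\quad M_B^\dag[N_B+A\diag(s)]\ge 0,\quad \|M_B^\dag[N_B+A\diag(s)]\|_p<1,
\]
reduce verbatim to the three hypotheses assumed here, namely $M_B^\dag b\le 0$, $M_B^\dag(N_B+A)\ge 0$, and $\|M_B^\dag(N_B+A)\|_p<1$. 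The rank requirement $\rank(M_B)=m$ is identical in the two statements, and the splitting $B=M_B-N_B$ carries over unchanged, so all the conditions needed to apply \Cref{thm:zn1}(a) are in force.

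Having matched the hypotheses, I would invoke \Cref{thm:zn1}(a) to produce a nonnegative vector $y_*\ge 0$ solving the linear system \eqref{eq:tgave}, which with $s=\mathbf{1}$ reads $[A\diag(s)-B]y=(A-B)y=b$. The theorem further asserts that $x_*=\diag(s)y_*$ solves GAVE~\eqref{eq:gave}. The concluding observation is that $\diag(s)=I_n$ forces $x_*=y_*$, so $x_*$ inherits the nonnegativity of $y_*$ directly, yielding the desired nonnegative solution of \eqref{eq:gave}.

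There is essentially no obstacle beyond this bookkeeping, since the content of the corollary is already contained in \Cref{thm:zn1}(a); the only point worth emphasizing is why nonnegativity transfers to $x_*$. For a general sign pattern $s\in\{-1,1\}^n$ the vector $x_*=\diag(s)y_*$ need not be nonnegative even when $y_*\ge 0$, because $\diag(s)$ flips signs on the coordinates where $s_{(i)}=-1$. It is precisely the choice $s=\mathbf{1}$, for which $\diag(s)$ is the identity and hence order-preserving, that guarantees $x_*=y_*\ge 0$. Thus no separate argument is required and the corollary follows immediately.
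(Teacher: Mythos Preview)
Your proposal is correct and matches the paper's approach exactly: the paper derives this corollary simply by specializing \Cref{thm:zn1}(a) to the sign pattern $s=[1,1,\ldots,1]^{\T}$, which is precisely what you do. Your additional remark explaining why the nonnegativity of $y_*$ transfers to $x_*$ only for this particular $s$ is a nice clarification that the paper leaves implicit.
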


Simply with $M_B=B$ and $N_B=0$, \Cref{thm:nonective} leads to

\begin{corollary}\label{cor:nonective}
Let $A,~B\in\mathbb{R}^{m\times n}$ and $b\in\mathbb{R}^m$ with $\rank(B) = m$ and $m<n$. If  $B^\dag b\leq0$, $B^\dag A\geq0$ and $\|B^\dag A\|_p<1$, then GAVE~\eqref{eq:gave} has at least one nonnegative solution.
\end{corollary}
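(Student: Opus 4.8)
The plan is to observe that \Cref{cor:nonective} is nothing more than the special instance of \Cref{thm:nonective} obtained by choosing the trivial splitting $M_B = B$ and $N_B = 0$; consequently no fresh analytic argument is needed, and the entire proof reduces to checking that each hypothesis of \Cref{thm:nonective} is met verbatim under this choice.

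First I would record the splitting. Setting $M_B = B$ and $N_B = 0$ gives $B = M_B - N_B$, and the structural requirement $\rank(M_B) = m$ of \Cref{thm:nonective} reduces to $\rank(B) = m$, which is assumed here. Thus \Cref{thm:nonective} is applicable in principle, and it remains only to translate its three inequality conditions into the hypotheses at hand.

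Next I would verify these three conditions one at a time. Since $M_B^\dag = B^\dag$, the condition $M_B^\dag b \le 0$ becomes $B^\dag b \le 0$; the condition $M_B^\dag(N_B + A) \ge 0$ becomes $B^\dag A \ge 0$ because $N_B = 0$; and $\|M_B^\dag(N_B + A)\|_p < 1$ becomes $\|B^\dag A\|_p < 1$. Each of these is exactly one of the stated hypotheses of \Cref{cor:nonective}. With all three conditions of \Cref{thm:nonective} confirmed, its conclusion, namely that GAVE~\eqref{eq:gave} has at least one nonnegative solution, transfers immediately.

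I do not expect any genuine obstacle in this argument: the substantive content, that is, the fixed-point/contraction-mapping construction of a nonnegative solution of \eqref{eq:tgave} and its conversion back to a solution of GAVE~\eqref{eq:gave}, is carried out once in \Cref{thm:zn1}(a) and already inherited by \Cref{thm:nonective}. The only point that merits a word of care is confirming that the trivial splitting still satisfies the rank hypothesis $\rank(M_B)=m$, which it does precisely because $\rank(B)=m$ is assumed. Beyond that, this corollary is a pure specialization.
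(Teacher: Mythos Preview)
Your proposal is correct and matches the paper's approach exactly: the paper states that the corollary follows ``simply with $M_B=B$ and $N_B=0$'' in \Cref{thm:nonective}, which is precisely the specialization you carry out.
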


Similarly to the proof of \Cref{thm:zn1}, we can obtain the following theorem.

\begin{theorem}\label{thm:soa}
Let $A, B\in\mathbb{R}^{m\times n}$ with $m < n$, $b\in\mathbb{R}^m$, and $s\in\{-1, 1\}^n$.
Let $A = M_{ A} - N_{ A}$ such that ${\rm rank}(M_{A}) = m$.
\begin{itemize}
  \item [{\rm (a)}] Let $1\le p\le\infty$.  If
  \begin{equation*}
  {\rm diag}(s)M_{A}^\dag b \geq 0,~{\rm diag}(s)M_{ A}^\dag [N_{A}{\rm diag}(s) + B] \geq 0,~\|M_{A}^\dag[N_{A}{\rm diag}(s) + B]\|_p< 1,
  \end{equation*}
then there exists a nonnegative solution $y_*$ of \eqref{eq:tgave}   such that $x_* = {\rm diag}(s)y_*$ is a solution of  GAVE~\eqref{eq:gave}.

  \item [{\rm (b)}] If
\begin{equation*}
{\rm diag}(s)M_A^\dagger b > 0, ~ \|M_A^\dagger [N_A {\rm diag}(s) + B]\|_{\infty} < \frac{\gamma}{2}~\text{with}~\gamma = \frac{\min\limits_i |(M_A^\dagger b)_i|}{\max\limits_i |(M_A^\dagger b)_i|},
\end{equation*}
then GAVE~\eqref{eq:gave} has infinitely many solutions with the sign pattern $s$.
\end{itemize}
\end{theorem}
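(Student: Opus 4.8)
The plan is to mirror the proof of \Cref{thm:zn1}, adapting the bookkeeping to the splitting $A = M_A - N_A$ rather than $B = M_B - N_B$. The starting observation is that seeking a solution $x$ of GAVE~\eqref{eq:gave} with sign pattern $s\in\{-1,1\}^n$ is equivalent, via $x = \diag(s)y$ with $y\ge 0$ (so that $|x| = y$), to finding a nonnegative solution $y$ of the linear system~\eqref{eq:tgave}, namely $[A\diag(s) - B]y = b$. Substituting $A = M_A - N_A$ rearranges this to $M_A\diag(s)\,y = [N_A\diag(s) + B]y + b$. The crucial difference from \Cref{thm:zn1} is that here the full-rank factor $M_A$ multiplies $\diag(s)y$ rather than $y$ itself; to exploit $M_A M_A^\dag = I_m$ I would therefore introduce the fixed-point map
\[
\psi(y) := \diag(s)\,M_A^\dag\big\{[N_A\diag(s) + B]y + b\big\},
\]
where the outer $\diag(s)$ (using $\diag(s)^{-1}=\diag(s)$) returns the iterate to the $y$-variable.

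First I would verify that $\psi$ is a strict contraction. Because left-multiplication by a $\pm1$ diagonal merely flips entry signs, it preserves every vector $p$-norm, so
\[
\|\psi(y) - \psi(z)\|_p = \|M_A^\dag[N_A\diag(s)+B](y-z)\|_p \le \|M_A^\dag[N_A\diag(s)+B]\|_p\,\|y-z\|_p,
\]
and the hypothesis $\|M_A^\dag[N_A\diag(s)+B]\|_p < 1$ (in (a), and the $\infty$-norm bound $<\gamma/2\le 1/2$ in (b)) makes $\psi$ contractive. The Banach fixed-point theorem yields a unique $y_*$ with $\diag(s)y_* = M_A^\dag\{[N_A\diag(s)+B]y_* + b\}$; multiplying by $M_A$ and using $M_A M_A^\dag = I_m$ recovers $M_A\diag(s)y_* = [N_A\diag(s)+B]y_* + b$, i.e.\ $[A\diag(s)-B]y_* = b$, so $y_*$ solves~\eqref{eq:tgave}.

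The remaining and more delicate work is to control the sign of $y_*$ through the iteration $y^{(k+1)} = \psi(y^{(k)})$ started at $y^{(0)} = \diag(s)M_A^\dag b$ (with $y^{(-1)}=0$). For part (a), the constant term of $\psi$ is exactly $y^{(0)} = \diag(s)M_A^\dag b \ge 0$ by the first hypothesis, while the linear part has matrix $\diag(s)M_A^\dag[N_A\diag(s)+B] \ge 0$ by the second; hence $y^{(k)}\ge 0$ for all $k$ by induction, so $y_* = \lim_k y^{(k)} \ge 0$, and then $x_* = \diag(s)y_*$ satisfies $Ax_* - B|x_*| = [A\diag(s)-B]y_* = b$. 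For part (b), I would reproduce the geometric-series estimate of \Cref{thm:zn1}: again using sign-flip invariance of the norms the contraction factor is $\|M_A^\dag[N_A\diag(s)+B]\|_\infty < \gamma/2$, which gives
\[
\|y_* - y^{(0)}\|_\infty < \frac{\gamma}{2-\gamma}\,\|M_A^\dag b\|_\infty \le \gamma\,\|M_A^\dag b\|_\infty = \min_i|(M_A^\dag b)_i|.
\]
Since the hypothesis $\diag(s)M_A^\dag b > 0$ forces $y^{(0)}_i = |(M_A^\dag b)_i|$, this bound yields $y_{*i} > |(M_A^\dag b)_i| - \min_j|(M_A^\dag b)_j| \ge 0$, so $y_* > 0$. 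Then~\eqref{eq:tgave} has a strictly positive solution, \Cref{cor:uleg} supplies infinitely many positive solutions, and each maps under $x = \diag(s)y$ to a GAVE solution with sign pattern exactly $s$.

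I expect the only genuine subtlety to be the correct placement of $\diag(s)$ in the fixed-point map (dictated by the fact that the $A$-splitting leaves $M_A$ adjacent to $\diag(s)y$ instead of $y$) together with the repeated use of the sign-flip invariance $\|\diag(s)v\|_p = \|v\|_p$, which is what lets the hypotheses stated without the outer $\diag(s)$ still control the iteration defined with it. Every other step is a direct transcription of the argument for \Cref{thm:zn1} with $(M_B,N_B)$ replaced by $(M_A,N_A)$ and the accompanying sign bookkeeping.
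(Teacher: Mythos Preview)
Your proposal is correct and matches the paper's intent: the paper does not spell out a proof of \Cref{thm:soa} at all but simply says it is obtained ``similarly to the proof of \Cref{thm:zn1},'' and what you wrote is precisely that adaptation. The one point you rightly flagged as the only subtlety---inserting the outer $\diag(s)$ in the fixed-point map so that $M_AM_A^\dag=I_m$ can be applied after multiplying by $M_A\diag(s)^{-1}=M_A\diag(s)$, together with the isometry $\|\diag(s)v\|_p=\|v\|_p$---is handled correctly and is exactly what makes the hypotheses, stated in terms of $M_A^\dag[N_A\diag(s)+B]$ and $\diag(s)M_A^\dag b$, control the iteration.
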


Letting $s=[1,1,\ldots,1]^T\in\bbR^n$ in \Cref{thm:soa} (a) yields the following corollary.
\begin{corollary}\label{thm:nonectiveA}
Let $A,~B\in\mathbb{R}^{m\times n}$ with $m<n$ and $b\in\mathbb{R}^m$. Let $A = M_A - N_A$ such that
$\rank(M_A) =m$. If
$$
M_A^\dag b\geq0, ~
M_A^\dag (N_A + B) \geq 0, \quad
\|M_A^\dag (N_A + B)\|_p <1,
$$
then GAVE~\eqref{eq:gave} has at least one nonnegative solution.
\end{corollary}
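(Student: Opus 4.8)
The plan is to obtain this corollary as a direct specialization of \Cref{thm:soa}(a) with the all-ones sign vector $s = [1,1,\ldots,1]^{\T}\in\mathbb{R}^n$. With this choice one has $\diag(s)=I_n$, so the three hypotheses of \Cref{thm:soa}(a) collapse exactly onto the three hypotheses stated in the corollary: the sign condition $\diag(s)M_A^\dag b\ge 0$ becomes $M_A^\dag b\ge 0$; the nonnegativity condition $\diag(s)M_A^\dag[N_A\diag(s)+B]\ge 0$ becomes $M_A^\dag(N_A+B)\ge 0$; and the contraction condition $\|M_A^\dag[N_A\diag(s)+B]\|_p<1$ becomes $\|M_A^\dag(N_A+B)\|_p<1$. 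Thus the premises of the corollary are precisely those of \Cref{thm:soa}(a) evaluated at $s=[1,\ldots,1]^{\T}$.

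First I would invoke \Cref{thm:soa}(a), which under these hypotheses produces a nonnegative solution $y_*$ of the linear system~\eqref{eq:tgave}, namely $[A\diag(s)-B]y_*=b$, such that $x_*=\diag(s)y_*$ solves GAVE~\eqref{eq:gave}. Since $\diag(s)=I_n$, the system~\eqref{eq:tgave} reduces to $(A-B)y_*=b$, and the resulting solution satisfies $x_*=\diag(s)y_*=y_*\ge 0$. Hence $x_*$ is a nonnegative solution of GAVE~\eqref{eq:gave}, as claimed.

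There is essentially no obstacle here: the argument is a verbatim substitution, and the conclusion follows immediately from the already-established \Cref{thm:soa}(a). The only point warranting care is confirming that each of the three transformed conditions matches the corollary's hypotheses once $\diag(s)=I_n$ is imposed—a routine check—together with the observation that the output $x_*=\diag(s)y_*$ is automatically nonnegative precisely because $s$ is the all-ones vector, so that the sign-pattern solution delivered by the theorem is in fact a genuinely nonnegative one.
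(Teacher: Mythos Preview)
Your proof is correct and matches the paper's approach exactly: the paper obtains this corollary simply by letting $s=[1,1,\ldots,1]^{\T}\in\mathbb{R}^n$ in \Cref{thm:soa}(a), and your argument carries out precisely this specialization, noting that $\diag(s)=I_n$ reduces the hypotheses and yields $x_*=y_*\ge 0$.
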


Simply with $M_A=A$ and $N_A=0$, \Cref{thm:nonectiveA} leads to

\begin{corollary}\label{cor:nonectiveAa}
Let $A,~B\in\mathbb{R}^{m\times n}$ and $b\in\mathbb{R}^m$ with $\rank(A) = m$ and $m<n$. If  $A^\dag b\geq 0$, $A^\dag B\geq0$ and $\|A^\dag B\|_p<1$, then GAVE~\eqref{eq:gave} has at least one nonnegative solution.
\end{corollary}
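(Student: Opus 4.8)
The plan is to obtain this statement as an immediate specialization of Corollary~\ref{thm:nonectiveA}, using the trivial splitting of $A$. First I would set $M_A = A$ and $N_A = 0$, so that $A = M_A - N_A$ holds trivially and, by the hypothesis $\rank(A) = m$, the rank requirement $\rank(M_A) = m$ of Corollary~\ref{thm:nonectiveA} is met.

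Next I would verify that the three conditions of Corollary~\ref{thm:nonectiveA} reduce exactly to the three hypotheses assumed here. Since $N_A = 0$, we have $M_A^\dag b = A^\dag b$, $M_A^\dag(N_A + B) = A^\dag B$, and $\|M_A^\dag(N_A + B)\|_p = \|A^\dag B\|_p$. Hence the assumptions $A^\dag b \ge 0$, $A^\dag B \ge 0$, and $\|A^\dag B\|_p < 1$ are precisely the conditions $M_A^\dag b \ge 0$, $M_A^\dag(N_A + B) \ge 0$, and $\|M_A^\dag(N_A + B)\|_p < 1$. Invoking Corollary~\ref{thm:nonectiveA} then yields at least one nonnegative solution to GAVE~\eqref{eq:gave}, as claimed.

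There is no substantive obstacle here: the result is a pure specialization, and the only thing to check is that the substitution $M_A = A$, $N_A = 0$ leaves every hypothesis intact, which it does by inspection. All the analytic content---the Banach fixed-point argument together with the nonnegativity of the fixed-point iterates---is already carried by Theorem~\ref{thm:soa}(a), from which Corollary~\ref{thm:nonectiveA} descends, so nothing beyond this trivial bookkeeping is required.
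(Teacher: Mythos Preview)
Your proposal is correct and matches the paper's approach exactly: the paper simply states that setting $M_A = A$ and $N_A = 0$ in Corollary~\ref{thm:nonectiveA} yields the result, which is precisely the trivial splitting and verification you carry out.
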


\begin{remark}
{\rm
Since any convex combination of two nonnegative solutions of  GAVE~\eqref{eq:gave} is still a nonnegative solution,  the number of the nonnegative solution of GAVE~\eqref{eq:gave} can only be zero, one, or infinity. It is noted that
GAVE~\eqref{eq:gave} may have more than one solutions even if it has a
unique nonnegative solution. In what follows, we will use seven specific GAVE~\eqref{eq:gave} to illustrate various
possibilities:
\begin{alignat}{3}
A&=\begin{bmatrix}
      -0.5 & 0 & 0 \\
      0 & -0.1 & -0.05 \\
    \end{bmatrix}, &\quad
    B&=
    \begin{bmatrix}
      0 & 0    & 0 \\
      0 & -0.1 & 0 \\
    \end{bmatrix},&\quad
    b&=\begin{bmatrix}
        -0.5\\
        -0.25
      \end{bmatrix}; \label{eq:4egs-1} \\
A&=\begin{bmatrix}
        0 & 0.01 & 0\\
        0 & 0    & 0
      \end{bmatrix}, &\quad
    B&=\begin{bmatrix}
        0 & 0& 0\\
        0.01 & 0& 0.01
      \end{bmatrix},&\quad
      b &=\begin{bmatrix}
           0.1\\
           0
         \end{bmatrix}; \label{eq:4egs-2} \\
A&=\begin{bmatrix}
        0.1 & 0.1 & 0\\
        0 & 0 & 0
      \end{bmatrix}, &\quad
   B&=\begin{bmatrix}
        0 & 0  & 0\\
        0 & 0  & -0.1
      \end{bmatrix},&\quad
      b &=\begin{bmatrix}
           0\\
           1
         \end{bmatrix}; \label{eq:4egs-3} \\
A &=\begin{bmatrix}
        -1 & 0 & 0 & 0\\
        -1 & 0 & 0 & 0\\
        0 & 0 & 0 & 1
      \end{bmatrix}, &\quad
 B&=\begin{bmatrix}
        -2 & -1  & -1 & 0\\
        -2 & -1  & 1 & 0\\
        0 & 0 & 0 & 0
      \end{bmatrix},&\quad
 b &=\begin{bmatrix}
      1\\
      -1\\
      0
    \end{bmatrix}; \label{eq:4egs-4} \\
A &=\begin{bmatrix}
        0.5 &   0 & 0.5\\
        0   & 0.5 & 0
      \end{bmatrix},&\quad
  B&=\begin{bmatrix}
        1 & 0  & 0\\
        0 & 1  & 0
      \end{bmatrix},&\quad
      b &=\begin{bmatrix}
           -1\\ -1
         \end{bmatrix}; \label{eq:4egs-5} \\
A &=\begin{bmatrix}
        0.5 &   0.5 & 0.5\\
        0.5   & 0.5 & 0.5
      \end{bmatrix},&\quad
 B&=\begin{bmatrix}
        1 & 1  & 1\\
        2 & 1  & 1
      \end{bmatrix},&\quad
 b &=\begin{bmatrix}
      0\\
      0
    \end{bmatrix}; \label{eq:4egs-6}\\
    A &=\begin{bmatrix}
        -1 &   0 & 0\\
        -1   & 0 & 0
      \end{bmatrix},&\quad
 B&=\begin{bmatrix}
        -2 & -1  & -1\\
        -2 & -1  & 1
      \end{bmatrix},&\quad
 b &=\begin{bmatrix}
      1\\
      -1
    \end{bmatrix}. \label{eq:4egs-7}
\end{alignat}

For \eqref{eq:4egs-1}, let $M_B =
    \begin{bmatrix}
      0.5 & 0.1 & 0.1 \\
      0 & 0.01 & 0.1 \\
    \end{bmatrix}$. Then $\rank(M_B)=2$ and
    $$
    M_B^\dag(N_B + A)\approx
    \begin{bmatrix}
      0 & 0.1727 & 0.0883 \\
      0 & 0.0407 & 0.0652 \\
      0 & 0.0959 & 0.4935\\
    \end{bmatrix}\geq0,
    $$
    $M_B^\dag b\approx [-0.4413,-0.3262,-2.4674]^{\T}<0$, and $\|M_B^\dag(N_B + A)\|_2\approx 0.5233<1.$ Hence, the conditions of \Cref{thm:nonective} are satisfied. By some algebra, the associated GAVE has infinitely many nonnegative solutions, i.e., $x=[1,x_2,5]^{\T}$ with  $x_2\geq 0$.

For \eqref{eq:4egs-2}, let $M_B =
    \begin{bmatrix}
      0 & -0.5 & 0 \\
      0.01 & 0 & 0.01 \\
    \end{bmatrix}$. Then $\rank(M_B)=2$ and
    $$
    M_B^\dag(N_B + A)=
    \begin{bmatrix}
      0 & 0 & 0 \\
      0 & 0.98 & 0 \\
      0 & 0 & 0\\
    \end{bmatrix}\geq0,
    $$
    $M_B^\dag b= [0,-0.2,0]^{\T}\leq0$, and $\|M_B^\dag(N_B + A)\|_2= 0.98<1.$ Hence, the conditions of \Cref{thm:nonective} are satisfied. By some algebra, the associated GAVE  has a unique
    nonnegative solution $x=[0,10,0]^{\T}$.

    For \eqref{eq:4egs-3}, let $M_B =
    \begin{bmatrix}
      -0.1 & -0.1 & 0 \\
      0 & 0 & -0.1 \\
    \end{bmatrix}$. Then  $\rank(M_B)=2$ and
    $$M_B^\dag(N_B + A)=
    \begin{bmatrix}
      0 & 0 & 0 \\
      0 & 0 & 0 \\
      0 & 0 & 0\\
    \end{bmatrix}\geq 0,$$
    $M_B^\dag b= [0,0,-10]^{\T}\leq0$, and $\|M_B^\dag(N_B + A)\|_2=0<1.$ Hence, the conditions of \Cref{thm:nonective} are satisfied. By some algebra, the associated GAVE~\eqref{eq:gave} has infinitely many solutions while its nonnegative solution is unique.

    For \eqref{eq:4egs-4}, let $M_B =
    \begin{bmatrix}
      -2 & -1  & -1 & 0\\
        -2 & -1  & 1 & 0\\
        0 & 0 & 0 & -1
    \end{bmatrix}$. Then $\rank(M_B)=3$ and
    $$M_B^\dag(N_B + A)=
    \begin{bmatrix}
      0.4 & 0 & 0 & 0 \\
      0.2 & 0 & 0 & 0\\
      0 & 0 & 0 & 0\\
        0 & 0 & 0 & 0\\
    \end{bmatrix}\geq 0,$$
    $M_B^\dag b= [0,0,-1, 0]^{\T}\leq 0$, and $\|M_B^\dag(N_B + A)\|_2\approx 0.4472 <1.$ Hence, the conditions of \Cref{thm:nonective} are satisfied. By some algebra, the associated GAVE has two solutions $x = [0,0, \pm 1,0]^{\T}$ with one being nonnegative.

    For \eqref{eq:4egs-5}, we have $\rank(B) =2$, $B^\dag b=[-1,-1,0]^{\T}\leq 0$, $B^\dag A=
      \begin{bmatrix}
      0.5 & 0   & 0.5 \\
      0   & 0.5 & 0 \\
      0   & 0   & 0 \\
      \end{bmatrix}\geq 0$ and $\|B^\dag A\|_2\approx 0.7071<1$. Thus, the conditions of \Cref{cor:nonective} are satisfied. By some algebra, the associated GAVE has infinitely many nonnegative solutions, i.e., $x=[x_1, 2, x_1-2]^{\T}$ with $x_1\in[2,+\infty)$.

     For \eqref{eq:4egs-6}, we have $\rank(B) =2$, $B^\dag b=[0,0,0]^{\T}\leq 0$, $B^\dag A=
      \begin{bmatrix}
      0 & 0   & 0 \\
      0.25   & 0.25 & 0.25 \\
      0.25   & 0.25   & 0.25 \\
      \end{bmatrix}\geq 0$ and $\|B^\dag A\|_2\approx 0.6124 < 1$. Thus, the conditions of \Cref{cor:nonective} are satisfied. It can be checked that the associated GAVE has a unique  solution, i.e., the nonnegative solution $x=[0, 0, 0]^{\T}$.

      For \eqref{eq:4egs-7}, we have $\rank(B) =2$, $B^\dag b=[0,0,-1]^{\T}\leq 0$, $B^\dag A=
      \begin{bmatrix}
      0.4 & 0   & 0 \\
      0.2   & 0 & 0 \\
      0  & 0  & 0
      \end{bmatrix}\geq 0$ and $\|B^\dag A\|_2\approx 0.4472 < 1$. Hence, the conditions of \Cref{cor:nonective} are satisfied and the associated GAVE has a unique nonnegative solution $x = [0,0,1]^{\T}$ while it totally has two solutions.
      }
\end{remark}

\begin{remark}\label{unisoluave}
{\rm In \cite[Proposition 5]{mame2006}, the authors showed that a nonnegative solution to AVE~\eqref{eq:ave} exists if
\begin{equation}\label{avens}
A\ge 0, \quad \|A\|_2<1 \quad \text{and}\quad b\le 0.
\end{equation}
Indeed, we can prove that the nonnegative solution of AVE~\eqref{eq:ave} is unique if \eqref{avens} holds, which was not mentioned in \cite[Proposition 5]{mame2006}. In addition, the result can be  extended to square GAVE~\eqref{eq:gave} when $B$ is nonsingular, i.e., the nonnegative solution of GAVE~\eqref{eq:gave} is unique if $B^{-1} b\leq 0$, $B^{-1} A\geq0$ and $\|B^{-1} A\|_p<1$. When $B$ is singular, the nonnegative solution of GAVE~\eqref{eq:gave} is unique if there exists a splitting  $B = M_B -N_B$ such that $M_B^{-1} b \leq 0$, $M_B^{-1}(N_B + A)\geq 0$ and $\|M_B^{-1}(N_B+A)\|_p<1$.
}
\end{remark}

\begin{remark}\label{thm:tmp}{\rm
Let $A,\, B \in\mathbb{R}^{m\times n}$ with $m<n$, and $b=0\in\mathbb{R}^m$.
\begin{enumerate}[{\rm (a)}]
  \item Suppose $A = B$. If either $A_{(i,:)} > 0$ or $A_{(i,:)}<0$ for all $i\in\{1,2,\ldots,m\}$, then
       any nonnegative $x\in\bbR^n$ is a solution to GAVE~\eqref{eq:gave}.

  \item Suppose $A = -B$.  If $A_{(i,:)} > 0$ or $A_{(i,:)}<0$ for all $i\in\{1,2,\ldots,m\}$, then
       any nonpositive $x\in\bbR^n$ is a solution to GAVE~\eqref{eq:gave}.
\end{enumerate}}
\end{remark}

We end this section by pointing out that GAVE~\eqref{eq:gave} with $|A|<B$ (or $B<A\le 0$) and $b =0\in\mathbb{R}^m$ has a unique solution, i.e., the  trivial solution. We note that this situation cannot occur in the case of  underdetermined homogeneous linear system \eqref{eq:le},  a characteristic that distinguishes GAVE~\eqref{eq:gave} from homogeneous linear system because an underdetermined homogeneous linear system always has infinitely many solutions.

\section{Conclusion}\label{sec:Conclusions}
Existence of solutions for an underdetermined generalized absolute value equation (GAVE) are investigated. Conditions under which an underdetermined GAVE has at least one solution or infinitely many solutions are established. We also investigate whether an underdetermined GAVE has infinitely many solutions with no zero entry that possess a particular or any given sign
pattern. As a by-product, numerical methods are constructed to compute a solution to an underdetermined GAVE if it has a solution.

\bibliographystyle{plain}
\bibliography{ref-gave}

\end{document}